\newtheorem{prethm}{{\bf Theorem}}
\newenvironment{thm}{\begin{prethm}{\hspace{-0.5
em}{\bf.}}}{\end{prethm}}
\newtheorem{precor}{{\bf Corollary}}
\newenvironment{cor}{\begin{precor}{\hspace{-0.5
em}{\bf.}}}{\end{precor}}
\newtheorem{preprop}{{\bf Proposition}}
\newtheorem{preque}{{\bf Question}}
\newtheorem{preques}{{\bf Question}}
\newtheorem{prelemma}{{\bf Lemma}}
\newenvironment{lemma}{\begin{prelemma}{\hspace{-0.5
em}{\bf.}}}{\end{prelemma}}
\newtheorem{prefact}{{\bf Fact}}
\newtheorem{preobs}{{\bf Observation}}
\newtheorem{prerem}{{\bf Remark}}
\newtheorem{prefig}{{\bf Figure}}
\newtheorem{prelemm}{{\bf Lemma}}
\newtheorem{preex}{{\bf Example}}
\newtheorem{prepro}{{\bf Proposition}}
\newtheorem{prelem}{{\bf Theorem}}
\newenvironment{lem}{\begin{prelem}{\hspace{-0.5
em}{\bf.}}}{\end{prelem}}
\newtheorem{preproof}{{\bf Proof.}}
\newenvironment{proof}[1]{\begin{preproof}{\rm
               #1}\hfill{$\rule{2mm}{2mm}$}}{\end{preproof}}
\newtheorem{preconj}{{\bf Conjecture}}
\newenvironment{conj}{\begin{preconj}{\hspace{-0.5
em}{\bf.}}}
{\end{preconj}}
\newtheorem{predeff}{{\bf Definition}}
\newenvironment{deff}{\begin{predeff}{\hspace{-0.5
em}{\bf.}}}{\end{predeff}}
\def\n#1{\vbox to 3mm{\vspace{1mm}\vfill \hbox to 2.0mm{\hfill
             $#1$\hfill} \vfill }}
\def\m#1#2{\raise 0.2ex\hbox{%         % You can change 0.2
    ${#1_{\bf \displaystyle #2}}$}}
\def\x#1{\raise 0.5ex\hbox{%         % You can change 0.5
    ${#1}$}}
\title{{\bf On the oriented perfect path double cover conjecture}}
{\small
\author{
{\sc Behrooz Bagheri Gh.} and
{\sc Behnaz Omoomi}\\
[1mm]
{\small \it  Department of Mathematical Sciences}\\
{\small \it  Isfahan University of Technology} \\
{\small \it 84156-83111,\ Isfahan, Iran}}
%\author{Behrooz Bagheri Gh. and
%Behnaz Omoomi }
%
\date{}

\begin{document}
\maketitle
%
%{\Large\bf BagheriOmoomi 16 }
\begin{abstract}
An {\sf oriented perfect path double cover} ($\rm OPPDC$) of a
graph $G$ is a collection of directed paths in the symmetric
orientation $G_s$ of
 $G$ such that
each edge of $G_s$ lies in exactly one of the paths and  each
vertex  of $G$ appears just once as a beginning and just once as an
end of a path. Maxov{\'a} and  Ne{\v{s}}et{\v{r}}il (Discrete
Math. 276 (2004) 287-294)  conjectured that every graph except
two complete graphs $K_3$ and $K_5$ has an $\rm OPPDC$ and they
proved that the minimum degree of the minimal counterexample to
this conjecture   is at least four. In this paper, among some
other results,  we prove that the minimal  counterexample to this
conjecture is $2$-connected and $3$-edge-connected.
\end{abstract}
%\newpage
{\bf Keywords:} Oriented perfect path double cover, Oriented cycle
double cover.
%%%%%%%%%%%%%%%%%%%%%%%%%%%%%%%%%%%%%%%%%%%%%%%%%%%%%%%%%%%%%%%

\section{\large Introduction}        %sec1
We denote by $G=(V,E)$ a finite undirected graph with no loops or
multiple edges. The {\sf symmetric orientation} of $G$, denoted
by $G_s$,   is an oriented graph obtained from $G$ by replacing
each edge of $G$ by a pair of opposite directed arcs.
%A {\sf circuit} (or {\sf  even subgraph}) in G is a subgraph of $G$
% such that each vertex is incident to an even number of edges.
%An {\sf oriented circuit} in $G_s$ is a subgraph of  $G_s$ such that
%for each vertex its out-degree equals to its in-degree.
%A {\sf cycle} ({\sf an oriented cycle}) is a minimal nonempty circuit
%(oriented circuit).

A {\sf cycle double cover} ${\rm (CDC)}$ of a graph $G$ is a
collection of its cycles such that each edge of $G$ lies in
exactly two of the cycles. A well-known conjecture of
Seymour~\cite{Seymour} asserts that every simple bridgeless graph
has a ${\rm CDC}$. This problem  also motivated several related
conjectures. A {\sf small cycle double cover} ${\rm (SCDC)}$ of a
graph on $n$ vertices is a ${\rm CDC}$ with at most $n-1$ cycles.
Bondy conjectured that every simple bridgeless graph has an ${\rm
SCDC}$~\cite{Bondy}.

An {\sf oriented cycle double cover} ${\rm (OCDC)}$ of $G$ is a
collection of directed cycles in $G_S$ of length at least $3$ such that
each edge of $G_S$ lies in exactly one of the cycles.
Jaeger~\cite{Jager} conjectured that every bridgeless graph has
an oriented cycle double cover.  A {\sf small oriented  cycle
double cover} ${\rm (SOCDC)}$ of a graph $G$ on $n$ vertices is an
${\rm OCDC}$ with at most $n-1$ elements.

A {\sf perfect path double cover} ${\rm (PPDC)}$  of a graph $G$
 is a collection $\mathcal{P}$  of paths in $G$ such that each
edge  of $G$ belongs  to  exactly two  members of $\mathcal{P}$
and  each vertex  of $G$ occurs  exactly twice as an  end  of a
path in  $\mathcal{P}$~\cite{BondyPPDC}. In~\cite{PPDC} it is
proved that every simple graph has a $\rm PPDC$. The existence of
a ${\rm PPDC}$ for graphs in general is equivalent to the
existence of an ${\rm SCDC}$ for bridgeless  graph with a vertex
joined to all other vertices.
\begin{deff}~{\rm\cite{MR1825628}}
An {\sf oriented perfect path double cover  ($\rm OPPDC$)} of a
graph $G$ is a collection of directed paths in the symmetric
orientation $G_s$ such that each edge of $G_s$ lies in exactly
one of the paths and  each vertex  of $G$ appears just once as
a beginning and just once as an end of a path.
\end{deff}

 Similar to above, it can  be seen that  the existence of an ${\rm OPPDC}$ for graphs in general
is equivalent to the existence of an ${\rm SOCDC}$ for bridgeless  graphs with a
vertex joined to all other vertices.
%Conjectures ${\rm OPPDC}$ and ${\rm SOCDC}$ fail to be true generally.
Maxov{\'a} and
Ne{\v{s}}et{\v{r}}il in~\cite{MR1825628} showed that two complete
graphs $K_3$ and $K_5$ have no  $\rm OPPDC$, and
in~\cite{MR2046642}, they conjectured the following statement.
\begin{conj}~{\rm\cite{MR2046642}} {\rm ($\rm OPPDC$ conjecture)}
Every connected graph except $K_3$ and $K_5$ has an $\rm OPPDC$.
\end{conj}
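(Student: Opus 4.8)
The plan is to argue by contradiction through a minimal counterexample, combining the structural restrictions already available with a reduction-and-extension scheme. Let $G$ be a counterexample to the conjecture with as few edges as possible. Then $G$ is connected, $G\ne K_3,K_5$, and by the theorem of Maxov\'a and Ne\v{s}et\v{r}il together with the present paper we may assume $\delta(G)\ge 4$ and that $G$ is $2$-connected and $3$-edge-connected. Note that $\delta(G)\ge 4$ forces at least five vertices, and since the only $5$-vertex graph with $\delta\ge 4$ is $K_5$, which is excluded, $G$ has at least six vertices. The goal is to locate inside $G$ a local configuration that can be reduced to a strictly smaller admissible graph $G'$, obtain an $\rm OPPDC$ of $G'$ by minimality, and then lift it to an $\rm OPPDC$ of $G$, contradicting the choice of $G$.

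It is convenient to describe an $\rm OPPDC$ through its transition system. Since $G_s$ is connected and balanced (each vertex has equal in- and out-degree $d(v)$), an $\rm OPPDC$ is equivalent to choosing, at every vertex $v$, a matching of size $d(v)-1$ between the incoming and outgoing arcs at $v$ — leaving exactly one incoming and one outgoing arc free — so that the induced directed trails contain no closed trail and each trail is a vertex-simple path. The free outgoing arc at $v$ is where the unique path starting at $v$ begins, and the free incoming arc is where the unique path ending at $v$ terminates. A convenient sufficient route to such a system exploits that $G_s$ is Eulerian: fix a directed Eulerian circuit and select exactly one of the $d(v)$ passages through each vertex $v$ as a \emph{cut}; the $n$ resulting segments form an $\rm OPPDC$ as soon as every segment is vertex-simple. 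Either viewpoint turns the construction into a routing problem that is local at each vertex, which is what makes reductions tractable.

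For the reduction step I would use a carefully chosen edge $e=uv$ and pass to $G'=G-e$ (or, when deletion is obstructed, to a graph obtained by splitting a vertex, i.e.\ by lifting a pair of arcs). Deleting $e$ lowers $d(u)$ and $d(v)$ by one and removes the two arcs $u\to v$ and $v\to u$ from $G_s$; since $\delta(G)\ge 4$ the degrees stay at least $3$, and since $G$ is $3$-edge-connected $G'$ is still connected, while the six-vertex bound guarantees $G'\notin\{K_3,K_5\}$. By minimality $G'$ has an $\rm OPPDC$, i.e.\ a transition system $T'$ of the kind above. The extension lemma to be proved states: given $T'$, one can reinsert the two arcs of $e$ and locally re-pair the transitions at $u$ and $v$ — possibly rerouting along a short connection furnished by $2$-connectivity — so that the start-once and end-once conditions are restored at both $u$ and $v$ and no new closed trail or repeated vertex is created. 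Here the internally disjoint $u$--$v$ connection is used to splice the two new arcs into existing paths, and $3$-edge-connectivity rules out the small-cut obstructions that would block the re-pairing.

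The main obstacle is exactly the simplicity requirement in the extension lemma: re-pairing transitions and splicing in the new arcs can merge two paths into one or break a path apart, and controlling that the outcome has no repeated vertex while leaving each vertex a start and an end exactly once is delicate. In the Eulerian-circuit formulation the same difficulty appears as the need to re-choose the per-vertex cuts so that every segment stays vertex-simple, where a single bad local choice may propagate globally. I expect that handling this uniformly will require a discharging argument: assign charge to the vertices (weighted by degree) and use $\delta(G)\ge 4$ together with $2$-connectivity and $3$-edge-connectivity to show that some reducible configuration — one for which the extension lemma can be checked directly — must occur. Producing a provably exhaustive list of such reducible configurations, together with the exceptional bookkeeping ensuring $G'\notin\{K_3,K_5\}$ at every reduction, is the crux on which a full proof of the conjecture rests.
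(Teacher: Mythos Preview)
The statement you are attempting to prove is a \emph{conjecture}; the paper does not prove it. The paper only establishes structural restrictions on a hypothetical minimal counterexample (it must be $2$-connected and $3$-edge-connected, and by earlier work has $\delta\ge 4$) and gives various sufficient conditions for an $\rm OPPDC$ to exist. There is therefore no ``paper's own proof'' to compare against.

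Your proposal is not a proof either, and you essentially say so yourself. The entire argument hinges on an unproved ``extension lemma'': that an $\rm OPPDC$ of $G-e$ can always be locally repaired to an $\rm OPPDC$ of $G$. You correctly identify that the simplicity of the resulting paths is the obstruction, and that controlling it may require a discharging argument producing an exhaustive list of reducible configurations---but you supply neither the lemma, nor the discharging rules, nor the list. Everything up to that point (minimality, $\delta\ge 4$, $2$-connectivity, $3$-edge-connectivity, $|V(G)|\ge 6$, the transition-system and Eulerian-circuit reformulations) is either already in the paper or standard, and none of it constitutes progress toward the actual difficulty. In short, the proposal is a plausible outline of how one might try to attack the conjecture, but the substantive step is missing, and the conjecture remains open.
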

In the following theorem a list of sufficient conditions for a
graph to admit an $\rm OPPDC$   is provided.
\begin{lem}~{\rm\cite{MR1825628}}\label{main} Let $G\ne K_3$ be a graph.
In  each of the following cases, $G$ has an $\rm OPPDC$.
\begin{description}
\item{\rm\bf(i)} $G$ is a union of two arbitrary trees.
\item{\rm\bf(ii)}  Each vertex of  $G$ has odd degree.
\item{\rm\bf(iii)} $G$ arise from a graph $G'$ which  has an $\rm OPPDC$,
by dividing one edge of $G'$.
\item{\rm\bf(iv)} $G$ has no adjacent vertices of degree greater than two.
\item{\rm\bf(v)} $G$ is a $2$-connected graph of order $n$ and $|E(G)|\le 2n-1$.
\item{\rm\bf(vi)} $G=G_1 \cup G_2$ and $V(G_1) \cap V(G_2) = \{v\}$ which $G_i$
is a graph  with an $\rm OPPDC$, for $i=1,2$.
\item{\rm\bf(vii)} $G\setminus v$ has an  $\rm OPPDC$, for some  $v\in V(G)$
of degree less than $3$.
\end{description}
\end{lem}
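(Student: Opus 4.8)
The plan is to organise the seven statements around one reformulation and to reduce all but two of them to those two. Call a \emph{transition system} of $G$ a choice, at every vertex $v$, of one out-arc of $G_s$ declared \emph{initial}, one in-arc declared \emph{final}, and a bijection between the remaining $\deg(v)-1$ in-arcs and the remaining $\deg(v)-1$ out-arcs at $v$. Following these local bijections decomposes $A(G_s)$ into arc-disjoint directed walks; exactly $n$ of them are open, one starting and, dually, one ending at each vertex, and the rest are closed. Hence an $\mathrm{OPPDC}$ is \emph{precisely} a transition system with no closed walk and with all $n$ open walks simple: the count and the start/end condition come for free, so the whole difficulty, in every case, is to make the walks simple. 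I would prove this equivalence first.

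Granting it, the three ``surgery'' items become routine edits of an $\mathrm{OPPDC}$ $\mathcal P$ of the smaller graph. For (iii), when the edge $xy$ is subdivided by a new vertex $z$, the arc $x\to y$ lies in a unique path $A\in\mathcal P$ and $y\to x$ in a unique path $B\neq A$; inserting $z$ into both and then cutting $A$ at $z$ adds exactly one path, makes $z$ a start and an end once each, and destroys no simplicity. For (vi), with $V(G_1)\cap V(G_2)=\{v\}$, concatenating the path of $\mathcal P_1$ ending at $v$ with the path of $\mathcal P_2$ starting at $v$ (they meet only at $v$, so the result is simple) merges the two surplus paths and repairs the transition count at $v$. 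For (vii) one runs through $\deg v\in\{0,1,2\}$: adjoin a trivial path; or extend the path ending at the neighbour of $v$ to $v$ and add a length-one path; or, for $\deg v=2$, reroute the two paths ending at the neighbours of $v$, together with one short new path, through $v$. Iterating (vii) on leaves, from a one-vertex graph, already shows that every forest has an $\mathrm{OPPDC}$.

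Cycles I would handle directly: for even $n$ the paths $(i-1)\to i\to(i+1)$ and $(i+1)\to i\to(i-1)$, chosen with alternating orientation around $C_n$, form an $\mathrm{OPPDC}$; for odd $n\ge5$, $C_n$ is $C_{n-1}$ with an edge subdivided, so (iii) applies; and $C_3=K_3$ is excluded. Then (i) reduces to (v) and (vi): every block $B$ of a union of two trees is $2$-connected or a $K_2$ and is itself a union of two forests, so $|E(B)|\le 2|V(B)|-2\le 2|V(B)|-1$ and (v) applies to $B$ unless $B=K_3$, in which case --- $G\neq K_3$ --- the $K_3$ can be fused with an adjacent block and the two treated as a unit; reassembling the blocks by (vi) finishes. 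Item (iv) reduces to (i): if no two adjacent vertices have degree greater than $2$ then the vertices of degree $\ge3$ are independent, a short count then gives $|E(H)|\le2|V(H)|-2$ for every subgraph $H$, so $G$ has arboricity at most $2$, hence (being connected) $G$ is a spanning tree together with a forest, and the forest can be completed inside $G$ to a second spanning tree.

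What remains is (ii) and (v), where the simplicity of the walks must genuinely be forced. For (ii) I would use that adjoining to $G$ a vertex adjacent to all of $V(G)$ makes the graph Eulerian (every degree becomes even and $n$ is even) and, through the $\mathrm{OPPDC}$/$\mathrm{SOCDC}$ correspondence noted in the introduction, build the cover from a decomposition chosen so that oddness of the original degrees eliminates the leftover closed subwalks that would otherwise break simplicity. For (v) I would induct on $|E(G)|$, the base case $|E(G)|=n$ being a cycle; a sparse $2$-connected graph always has a vertex $v$ of degree $2$ or $3$, and a degree-$2$ vertex is either suppressed (if its two neighbours are non-adjacent, yielding a smaller $2$-connected graph, after which (iii) applies) or deleted (if they are adjacent, so that $G-v$ is again $2$-connected, after which (vii) applies). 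The main obstacle is the boundary case $|E(G)|=2n-1$: here the reductions above can fail --- suppressing a degree-$2$ vertex overshoots the edge bound of (v), while (vii) is unavailable for degree-$3$ vertices --- so one must instead argue directly, building a transition system and using $2$-connectivity to reroute its closed walks into genuine simple cycles and paths, with the hypothesis $|E(G)|\le2n-1$ controlling how entangled those walks can be (the bound is sharp: $K_5$ has $2\cdot5$ edges and no $\mathrm{OPPDC}$). Verifying that this rerouting always succeeds is the crux of the lemma.
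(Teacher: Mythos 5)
First, a remark on the comparison you ask for implicitly: the paper does not prove Theorem~\ref{main} at all --- it is quoted from Maxov{\'a} and Ne{\v{s}}et{\v{r}}il~\cite{MR1825628} --- so your attempt has to stand on its own. On its own terms, the transition-system reformulation and the ``surgery'' items are essentially right: your arguments for (iii) and (vi) are complete, and (vii) is correct after a small repair (for $\deg v=2$ you must use the path \emph{ending} at one neighbour and the path \emph{beginning} at the other, plus the new path through $v$; using ``the two paths ending at the neighbours'' as written either makes $v$ the end of two paths or forces an extension that may revisit the other neighbour). The cycle constructions and the arboricity count behind the reduction of (iv) to (i) are also fine.

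The genuine gaps are in the items that carry the weight of the lemma. For (v) you do not give a proof: your induction only works away from the boundary, and you yourself flag the case $|E(G)|=2n-1$ as an unverified ``rerouting'' of closed walks; since that case is exactly where the statement has content (as you note, $K_5$ shows the bound is sharp), item (v) is not established. Item (ii) is likewise only a plan --- ``choose the decomposition so that oddness eliminates the leftover closed subwalks'' names the difficulty without resolving it. Finally, your reduction of (i) (and hence of (iv), which you route through (i)) to (v) plus (vi) breaks precisely when a block is $K_3$: ``fusing the $K_3$ with an adjacent block'' produces a graph with a cut vertex, so (v) does not apply to the fused unit, and (vi) does not apply because $K_3$ has no ${\rm OPPDC}$. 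The case is not vacuous: the bowtie (two triangles sharing a vertex) is a union of two trees, satisfies the hypothesis of (iv), and both of its blocks are $K_3$; covering it requires an explicit construction, which is exactly the kind of gluing the present paper carries out, with concrete path lists, in Theorem~\ref{block}. So of the seven items, only (iii), (vi), (vii) (and the auxiliary cycle case) are actually proved; (i), (ii), (iv) and (v) remain open in your write-up.
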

In~\cite{MR1825628}, Maxov{\'a} and  Ne{\v{s}}et{\v{r}}il in the following
two theorems proved that if a graph of order $n$ with a vertex $v$ of degree $3$
has no $\rm OPPDC$ then there exists a graph of order $n-1$ which has no
$\rm OPPDC$ either.
\begin{lem}\label{3}~{\rm\cite{MR1825628}}
Let $G$ be a graph, $v\in V(G)$ be a vertex of degree $3$, and
$N(v)=\{x,y,z\}$ induces $K_3$ in $G$. If $G\setminus v$ has an
$\rm OPPDC$ then $G$ has also an $\rm OPPDC$.
\end{lem}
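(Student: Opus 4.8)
\emph{Proof plan.} The plan is to start from an ${\rm OPPDC}$ $\mathcal D$ of $H:=G\setminus v$ and modify it by local surgery. Since $G_s$ arises from $H_s$ by adjoining the single vertex $v$ together with the six arcs $\vec{vx},\vec{xv},\vec{vy},\vec{yv},\vec{vz},\vec{zv}$, and since $|V(G)|=|V(H)|+1$, the collection we must build has to cover those six new arcs exactly once, keep the coverage of every old arc, preserve the ``begins once / ends once'' status of every vertex of $H$, give $v$ that same status, and contain exactly one more path than $\mathcal D$ (informally, the path that begins at $v$).

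Two moves do all the work. \emph{Rerouting through $v$}: if a path of $\mathcal D$ uses an arc $\vec{ab}$ with $a,b\in\{x,y,z\}$, replace its subpath $a\to b$ by $a\to v\to b$; as $v$ is a new vertex this yields a simple directed path with the same endpoints, it covers $\vec{av}$ and $\vec{vb}$, it uncovers $\vec{ab}$, and it changes no begins/ends count. \emph{Endpoint adjustment}: prepend an arc to the path that begins at a given vertex, append an arc to the path that ends at a given vertex, or adjoin a short brand new path; each of these alters the begins/ends counts (and possibly the number of paths) in an easily tracked way.

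For the construction, consider the three arcs $\vec{xy},\vec{yz},\vec{zx}$. No single path of $\mathcal D$ can contain all three, for then it would contain the closed walk $x\to y\to z\to x$, which is impossible in a simple path. Hence two cyclically consecutive ones, say $\vec{sm}$ and $\vec{me}$ with $(s,m,e)$ a cyclic rotation of $(x,y,z)$, lie in \emph{distinct} paths $P_1\ni\vec{sm}$ and $P_2\ni\vec{me}$. Using this freedom (and, if needed, the mirror version with $\vec{yx},\vec{zy},\vec{xz}$) to avoid the degeneracies mentioned below, perform: reroute $\vec{sm}$ through $v$ inside $P_1$; reroute $\vec{me}$ through $v$ inside $P_2$; prepend $\vec{vs}$ to the path that begins at $s$; append $\vec{ev}$ to the path that ends at $e$; and adjoin the new path $s\to m\to e$. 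The two reroutes realize $\vec{sv},\vec{vm},\vec{mv},\vec{ve}$ and the two adjustments realize $\vec{vs},\vec{ev}$, so the six new arcs are each covered once; the reroutes uncovered exactly $\vec{sm}$ and $\vec{me}$, which the new path $s\to m\to e$ puts back. Bookkeeping: the prepend trades $s$ for $v$ as a beginning, the append trades $e$ for $v$ as an end, and the new path $s\to m\to e$ reinstates $s$ as a beginning and $e$ as an end while adding one path; so $v$ begins and ends exactly one path, every other vertex is unchanged, and the number of paths grew by exactly one. This is an ${\rm OPPDC}$ of $G$.

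The main difficulty I expect is checking that the surgery stays legal, i.e. that every rerouted or adjusted object is still a simple path and that the counts balance exactly. One must avoid: rerouting through $v$ twice inside one path (exactly the case $P_1=P_2$, excluded above); the prepend of $\vec{vs}$ landing on a path already rerouted through $v$ (one needs the path beginning at $s$ to differ from $P_1$ and $P_2$); the symmetric collision for the append; and the path beginning at $s$ coinciding with the path ending at $e$. In brief, one wants those four paths pairwise distinct, and when some coincidence is forced the offending prepend/append is replaced by the creation of a one arc path through $v$ together with a compensating concatenation of two paths at a common vertex, after which the counts rebalance as before. Since there are only finitely many ways the six triangle arcs can be distributed among the paths of $\mathcal D$, this is a bounded case analysis; it is the substantive part of the argument, the rest being the bookkeeping sketched above.
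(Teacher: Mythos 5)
The paper does not prove this lemma itself (it is quoted from Maxov\'a and Ne\v{s}et\v{r}il~\cite{MR1825628}), so your argument has to stand on its own. Its generic case is sound: if the four paths $P_1\ni\vec{sm}$, $P_2\ni\vec{me}$, the path of $\mathcal D$ beginning at $s$, and the path of $\mathcal D$ ending at $e$ are pairwise distinct, then the two reroutes, the prepend of $\vec{vs}$, the append of $\vec{ev}$, and the new path $s\to m\to e$ do yield an $\rm OPPDC$ of $G$; your accounting of the six new arcs, of the beginnings and endings, and of the path count (one more path, as required since $|V(G)|=|V(G\setminus v)|+1$) is correct, as is the observation that no single path can carry all three arcs of a cyclic orientation of the triangle.

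The gap is that the degenerate configurations --- which you yourself call ``the substantive part of the argument'' --- are not actually handled. Two concrete problems. First, you give no argument that some choice of consecutive pair, in either orientation of the triangle, makes the four relevant paths pairwise distinct; pigeonhole only gives $P_1\ne P_2$, and nothing prevents, say, $P_1$ from being the path that begins at $s$ (it then starts $s\to m\to\cdots$, and prepending $\vec{vs}$ after rerouting produces $v\to s\to v\to\cdots$, which is not a path). Second, the repair you sketch for the forced coincidences, ``creation of a one arc path through $v$ together with a compensating concatenation of two paths at a common vertex,'' is not a legal move as stated: concatenating two paths of $\mathcal D$ at a shared endpoint need not give a simple directed path, since the two paths may share other vertices; nor do you specify which paths are concatenated or verify that the arc and endpoint counts rebalance. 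Since the entire difficulty of the lemma is concentrated in exactly these collision cases, what you have is the right strategy and a correct treatment of the easy case, but not yet a proof.
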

\begin{lem}\label{3'}~{\rm\cite{MR1825628}}
Let $G$ be a graph, $v\in V(G)$ be a vertex of degree $3$, $N(v)
=\{x,y,z\}$, and $e=xz\not\in E(G)$. If $(G\setminus v)\bigcup
\{e\}$ has an $\rm OPPDC$ then $G$ also has an $\rm OPPDC$.
\end{lem}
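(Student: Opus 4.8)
The plan is to take an $\rm OPPDC$ $\mathcal P'$ of $G'=(G\setminus v)\cup\{e\}$, with $e=xz$, and to re-insert $v$ by rerouting a few members of $\mathcal P'$. The starting point is the observation that the two arcs $xz$ and $zx$ of $G'_s$ lie in \emph{distinct} members of $\mathcal P'$: a directed path visits each vertex at most once, so it cannot contain both arcs $x\to z$ and $z\to x$. Let $P_1$ be the member containing $xz$ and $P_2$ the one containing $zx$, and write $P_1=A\,x\,z\,B$ and $P_2=C\,z\,x\,D$ with $A,B,C,D$ (possibly empty) subpaths.

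Since $xz,zx$ are not arcs of $G_s$, I would discard them and replace $P_1,P_2$ by the detours $\widehat P_1=A\,x\,v\,z\,B$ and $\widehat P_2=C\,z\,v\,x\,D$; this covers the arcs $xv,vz,zv,vx$ and makes $v$ internal in two paths. As $|V(G)|=|V(G')|+1$, the output must have one more path than $\mathcal P'$, must also cover $yv$ and $vy$, and must give $v$ one beginning‑ and one end‑role. Suppose first $\deg_{G'}(y)\ge1$ and let $T$ be the member of $\mathcal P'$ ending at $y$; if $T\notin\{P_1,P_2\}$, replace $T$ by ``$T$ then $v$'' (extending along $yv$) and add the single new path $v\,y$ (covering $vy$). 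One checks directly that the result is an $\rm OPPDC$ of $G$: the path count is $|\mathcal P'|-3+4=|V(G)|$; $v$ is internal in $\widehat P_1,\widehat P_2$, ends $T{\cdot}v$, and begins $v\,y$; $y$ retains one beginning‑ and one end‑role and gains exactly the extra internal occurrence demanded by $\deg_G(y)=\deg_{G'}(y)+1$; and near $x,z$ (and everywhere else) the only change is the insertion of $v$, which preserves all roles.

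If $T\in\{P_1,P_2\}$, run the mirror construction on the member $S$ of $\mathcal P'$ beginning at $y$ (replace $S$ by ``$v$ then $S$'', add the path $y\,v$); this works unless $S\in\{P_1,P_2\}$ as well, which, since $P_1\ne P_2$, forces $\{S,T\}=\{P_1,P_2\}$ — say $P_1$ ends at $y$ and $P_2$ begins at $y$. When $\deg_{G'}(y)\ge2$ there is a member $R\ne P_1,P_2$ with $y$ internal, $R=E\,y\,F$, and I would replace $P_1,P_2,R$ by the four paths $\widehat P_1,\ \widehat P_2,\ E\,y\,v,\ v\,y\,F$; again all counts check out, the extra internal occurrence of $y$ arising because $R$'s single interior visit to $y$ is replaced by the two in $E\,y\,v$ and $v\,y\,F$. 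The case $\deg_{G'}(y)=0$ — i.e.\ $y$ isolated in $G'$, a leaf of $G$ — is similar: delete the trivial member $\{y\}$ of $\mathcal P'$ and add both $v\,y$ and $y\,v$.

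The one configuration not covered by the above is $\deg_{G'}(y)=1$ together with $\{S,T\}=\{P_1,P_2\}$, and this is the step I expect to be the real obstacle: then $y$ lies only on $P_1$ and $P_2$, which are precisely the two paths that carried the discarded arcs of $e$, so there is no spare path at $y$; and since the only arcs at $v$ still free are $vy$ and $yv$, which share the vertex $y$, $v$ cannot be inserted into any other member either. To handle it I would exploit that $\deg_G(y)=2$: by Theorem~\ref{main}(vii) it suffices to produce an $\rm OPPDC$ of $G\setminus y$, and smoothing the (now degree‑$2$) vertex $v$ exhibits $G\setminus y$ as a subdivision of $G'\setminus y$, so by Theorem~\ref{main}(iii) it is enough to obtain an $\rm OPPDC$ of $G'\setminus y$ from $\mathcal P'$. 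Since $y$ is a leaf of $G'$ with neighbour $w$, this comes down to deleting $y$ from $\mathcal P'$ and splicing, at $w$, the two members of $\mathcal P'$ that were the endpoint‑paths of $y$; the splice yields an $\rm OPPDC$ of $G'\setminus y$ whenever these two pieces meet only at $w$, while the cases where they overlap (and the smallest instances, where $G'\setminus y$ may itself be exceptional) must be settled by a separate, more careful rerouting or by direct inspection. Carrying this last step out in full generality is where the bulk of the casework lies.
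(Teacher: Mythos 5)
First, note that the paper does not prove this lemma at all: it is quoted from Maxov\'a and Ne\v{s}et\v{r}il \cite{MR1825628}, so there is no ``paper proof'' to compare with; your argument has to stand on its own. Its first three cases are fine (detouring $P_1=AxzB$, $P_2=CzxD$ through $v$ and repairing at $y$ via the path ending at $y$, the path beginning at $y$, or a path with $y$ internal), but the remaining configuration --- $\deg_{G'}(y)=1$ with the endpoint-paths of $y$ being exactly $P_1$ and $P_2$ --- is a genuine gap, and the escape route you sketch for it does not work. Your reduction rests on the claim that it ``is enough to obtain an $\rm OPPDC$ of $G'\setminus y$ from $\mathcal P'$'', i.e.\ that deleting a leaf preserves the existence of an $\rm OPPDC$. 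That is false in general: $K_5$ with one pendant vertex attached has an $\rm OPPDC$ (this is exactly the case $G_1=K_5$, $G_2=K_2$ of Theorem~\ref{block}), while deleting the leaf returns $K_5$, which has none. So the object you propose to construct may simply not exist, and nothing in your argument rules out that $G'\setminus y$ is such an exceptional graph. Moreover, even where it exists, your splicing recipe fails automatically in precisely the case at hand: the two endpoint-paths of $y$ that you want to splice at $w$ are $P_1$ and $P_2$ themselves, and these always share the vertices $x$ and $z$, so the ``pieces meet only at $w$'' condition is never satisfied; the ``separate, more careful rerouting'' you defer to is exactly the content of the lemma in this case, and you acknowledge it is not carried out.

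The case can in fact be closed by a local rerouting in the same spirit as your other cases, but the third path to modify must be found at $z$ (or $x$), not at $y$. Write $P_1=A\,x\,z\,B_1\,w\,y$ and $P_2=y\,w\,C_1\,z\,x\,D$, and let $W\in\mathcal P'$ be the path ending at $z$; since $P_1$ ends at $y$ and $z$ is internal in $P_2$, necessarily $W\notin\{P_1,P_2\}$. Replace $P_1,P_2,W$ by the four directed paths $A\,x\,v\,z\,B_1\,w\,y$, \ $y\,v\,x\,D$, \ $v\,y\,w\,C_1\,z$, and $W$ extended by the arc $zv$. One checks that all six arcs at $v$ are covered once, the arcs $xz,zx$ disappear, every vertex of $A,B_1,C_1,D,W$ keeps its roles, $z$ still ends exactly one path (now $v\,y\,w\,C_1\,z$ instead of $W$), $y$ gains exactly one internal occurrence, and $v$ is internal twice, begins once and ends once, with the path count increasing by one as required. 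Some such argument (this one, or the one in \cite{MR1825628}) is needed to replace your final paragraph; as written, the proof is incomplete.
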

Also in~\cite{MR1825628} the properties of a minimal counterexample to
the $\rm OPPDC$ conjecture are studied.
\begin{lem}\label{4}~{\rm\cite{MR1825628}}
If $G$ is the minimal counterexample to the $\rm OPPDC$ conjecture,
then $\delta(G)\ge 4$.
\end{lem}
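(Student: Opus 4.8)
The plan is to argue by contradiction. Take $G$ to be a counterexample to the $\rm OPPDC$ conjecture with $|V(G)|$ as small as possible; then $G$ is connected, $G\notin\{K_3,K_5\}$, and \emph{every} connected graph on fewer than $|V(G)|$ vertices other than $K_3$ and $K_5$ has an $\rm OPPDC$. Suppose for contradiction that $G$ has a vertex $v$ with $\deg(v)\le 3$. I would construct an $\rm OPPDC$ of $G$ by feeding suitable smaller graphs into this minimality hypothesis and then invoking Theorems~\ref{main}, \ref{3} and \ref{3'}, organising the argument according to whether $\deg(v)\le 2$ or $\deg(v)=3$.

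First I would treat $\deg(v)\le 2$. If $\deg(v)=0$ then $G=K_1$, which has an $\rm OPPDC$. If $G\setminus v$ is disconnected then $\deg(v)=2$ and $v$ has exactly one neighbour in each of two components, so $G=G_1\cup G_2$ with $V(G_1)\cap V(G_2)=\{v\}$ and $\deg_{G_i}(v)=1$; since a vertex of $K_3$ or $K_5$ has degree at least $2$, neither $G_i$ is $K_3$ or $K_5$, so each has an $\rm OPPDC$ by minimality and hence $G$ has one by Theorem~\ref{main}(vi). If $G\setminus v$ is connected and is not $K_3$ or $K_5$, then $G\setminus v$ has an $\rm OPPDC$ by minimality, so $G$ has one by Theorem~\ref{main}(vii). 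The only remaining possibility is $G\setminus v\in\{K_3,K_5\}$, so $|V(G)|\le 6$ and $G$ is the paw, the diamond ($K_4$ minus an edge), or $K_5$ with one added vertex of degree $1$ or $2$; the paw is handled by Theorem~\ref{main}(iv), the diamond (the edge-disjoint union of a star $K_{1,3}$ and a path $P_3$, hence of two trees) by Theorem~\ref{main}(i), and the two $K_5$-type graphs by exhibiting an explicit $\rm OPPDC$.

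Next I would treat $\deg(v)=3$, say $N(v)=\{x,y,z\}$. If $\{x,y,z\}$ induces a triangle, then $G\setminus v$ is connected --- any path of $G$ through $v$ can be rerouted along a side of the triangle --- so by minimality $G\setminus v$ has an $\rm OPPDC$ unless $G\setminus v\in\{K_3,K_5\}$; in the former subcase Theorem~\ref{3} gives an $\rm OPPDC$ of $G$, and in the latter $G$ is $K_4$ (the union of two Hamilton paths, so Theorem~\ref{main}(i) applies) or $K_5$ with an added degree-$3$ vertex (handled explicitly). If $\{x,y,z\}$ does not induce a triangle, pick a non-edge, say $xz\notin E(G)$, and set $H=(G\setminus v)\cup\{xz\}$. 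If $H$ is disconnected, then $x$ and $z$ lie in one component of $G\setminus v$ and $y$ in another (or $x,y,z$ lie in three distinct components); splitting $G$ at $v$ into the subgraphs induced by each component together with $v$ yields pieces in which $v$ has degree $1$ or $2$ and which --- because $xz\notin E(G)$ --- are never $K_3$ or $K_5$, so repeated application of Theorem~\ref{main}(vi) finishes this case. If $H$ is connected and $H\notin\{K_3,K_5\}$, then $H$ has an $\rm OPPDC$ by minimality, so $G$ has one by Theorem~\ref{3'}. If $H\in\{K_3,K_5\}$ then $|V(G)|\le 6$ and $G$ is the diamond or one further small graph --- $K_5$ minus an edge, together with an added vertex joined to both ends of that edge and to one more vertex --- which is checked directly.

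Since every branch contradicts the existence of $v$, we conclude $\delta(G)\ge 4$. The difficulty here is bookkeeping rather than ideas: one must verify, in each reduction, that the smaller graph produced is connected and is neither $K_3$ nor $K_5$ --- which is exactly where the choice of parameter ($|V(G)|$) and the degree restrictions on $v$ are used --- and then dispose of the bounded list of small exceptional graphs. I expect the $K_5$-plus-one-vertex graphs to be the only place calling for genuine care, but each is a finite verification; for example, with $K_5$ on $\{1,2,3,4,5\}$ and a pendant vertex $0$ adjacent to $5$, the six directed paths $(5,1,2,3,4)$, $(4,1,3,2,5)$, $(0,5,3,1,4,2)$, $(1,5,4,3)$, $(3,5,2,1)$, $(2,4,5,0)$ constitute an $\rm OPPDC$.
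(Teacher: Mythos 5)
The paper does not actually prove Theorem~\ref{4}: it is quoted from Maxov{\'a} and Ne{\v{s}}et{\v{r}}il~\cite{MR1825628} without proof, so there is no in-paper argument to compare against. Your reconstruction follows the route the quoted results are designed for (reduce at a vertex of degree at most $3$ via Theorem~\ref{main}(vi),(vii) and Theorems~\ref{3}, \ref{3'}, using vertex-minimality), and your case analysis is organized correctly: you check connectivity of the reduced graph before invoking minimality, you rule out $K_3$ and $K_5$ as pieces in the split cases (degree $1$ at the cut vertex, and the non-edge $xz$ when the piece would be a triangle), and your explicit $\rm OPPDC$ of $K_5$ plus a pendant vertex is correct (all $22$ arcs covered once, starts and ends are permutations of the vertex set).

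The only soft spot is that three of the exceptional graphs are asserted rather than exhibited: $K_5$ plus a vertex of degree $2$, $K_5$ plus a vertex of degree $3$, and $K_5$ minus an edge plus a vertex joined to both ends of that edge and one further vertex. None of these follows from the quoted lemmas (the deleted graph is $K_5$, which has no $\rm OPPDC$, and Theorem~\ref{main}(v) just misses since these graphs have $2n$ or more edges), so they genuinely require ad hoc verification, and a complete proof must include it. The verifications do succeed; for instance, for $K_5$ on $\{1,\dots,5\}$ with a new vertex $0$ adjacent to $1$ and $2$, the six directed paths $(0,2,4,1,3,5)$, $(3,4,2,5,1,0)$, $(5,2,0,1,4,3)$, $(4,5,3,2,1)$, $(2,3,1,5,4)$, $(1,2)$ form an $\rm OPPDC$, and similar explicit collections exist for the two degree-$3$ attachments. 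With those finite checks written out, your argument is a complete and correct proof of the lemma.
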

The structure of this paper is as follows. In
Section~\ref{sec:MCOPPDC} the properties of the minimal
counterexample to the $\rm OPPDC$ conjecture are studied and
proved that such graphs are $2$-connected and $3$-edge-connected.
In Section~\ref{sec:SEOPPDC} some sufficient conditions for a
graph to admit an $\rm OPPDC$ are provided.

%%%%%%%%%%%%%%%%%%%%%%%%%%%%%%%%%%%%%%%%%%%%%%%%%%%%%%%%%%%%%%%%%%%%%%%%%%%%%%%
\section{\large The minimal counterexample to the $\rm OPPDC$ conjecture}\label{sec:MCOPPDC}
 In this section, among some other results, we prove that
the minimal counterexample to the $\rm OPPDC$ conjecture is
$2$-connected and $3$-edge-connected.

The complete graphs $K_3$ and $K_5$ are  only known examples of
connected graphs which have no $\rm OPPDC$. By
Theorem~\ref{main}(ii), $K_{2n}$ has an $\rm OPPDC$. It is known
that every  symmetric orientation of $K_{2n+2}$, $n\ge 3$,  has a
decomposition into $2n+1$ directed Hamiltonian
cycles~\cite{MR584161}. This decomposition forms an $\rm OPPDC$
for $K_{2n+1}$, $n\ge 3$, by deleting a fix vertex from each cycle.

By Theorem~\ref{main}(vi), if every block of graph $G$ has an $\rm
OPPDC$, then $G$ also has an $\rm OPPDC$. Let $G$ be the minimal
counterexample to the $\rm OPPDC$ conjecture. Therefore, $G$,
either is $2$-connected or at least one of its blocks is $K_3$ or
$K_5$. In the following theorem, we show that the latter can not be
happen.

%By Theorem~\ref{4}, the minimum degree of $G$ is at least $4$.
%Hence, if $G$ is a separable graph, then at least  one of its
%blocks is $K_5$.

For every $\rm OPPDC$ of a connected graph $G$, say $\mathcal{P}$, and every vertex
$v\in V$, let $P^v$ and $P_v$ denote the paths in $\mathcal{P}$ beginning and
 ending with $v$, respectively. Also note that we can assume in an $\rm OPPDC$
directed paths of length zero are presented only at isolated
vertices.
\begin{thm}\label{block}
Let $G=B_1\cup\ldots\cup B_k$ and $B_i$'s be blocks of $G$.
If for each $i$, $1\le i\le k$, $B_i$ has an $\rm OPPDC$ or
 $B_i=K_3$ or $K_5$, then $G$ has an $\rm OPPDC$.
\end{thm}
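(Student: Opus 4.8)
The plan is to induct on the number of blocks $k$. The case $k=1$ is either immediate (if $B_1$ has an $\rm OPPDC$) or handled by the special constructions for $K_3$ and $K_5$ — though note $K_3$ genuinely has no $\rm OPPDC$, so the statement must be read as: when $k=1$ and $B_1\in\{K_3,K_5\}$, either $G=K_3$ is excluded, or $G=K_5$ which has no $\rm OPPDC$; hence the genuinely interesting content is $k\ge 2$, where every block, even a $K_3$ or $K_5$, sits inside the larger graph and shares a cut vertex with the rest. The real work is therefore to show that a $K_3$-block or $K_5$-block attached at a cut vertex can be merged with an $\rm OPPDC$ of the rest of the graph. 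By Theorem~\ref{main}(vi), if $B_k$ itself had an $\rm OPPDC$ we could immediately glue at the cut vertex and finish; so the two cases to dispatch are $B_k=K_3$ and $B_k=K_5$.

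First I would set up the merging lemma at a cut vertex $v$: let $G = H \cup B$ with $V(H)\cap V(B)=\{v\}$, where $H$ has an $\rm OPPDC$ $\mathcal P$ and $B\in\{K_3,K_5\}$. In $\mathcal P$ there is a unique path $P^v$ starting at $v$ and a unique path $P_v$ ending at $v$. The idea is to build, inside the symmetric orientation $B_s$, a near-$\rm OPPDC$ of $B$ that covers every arc of $B_s$ exactly once, has every vertex of $B\setminus v$ appearing exactly once as a start and once as an end, but at $v$ leaves exactly one "dangling" out-arc at the start of some path and one "dangling" in-arc at the end of some path; then I splice: prepend $P_v$ (reversed appropriately) onto the path of $B$ that starts at $v$, and append the path of $B$ ending at $v$ onto $P^v$. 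After splicing, $v$ is used once as a start and once as an end overall, every other vertex of $B$ keeps its single start/single end, and every vertex of $H$ is untouched. So the whole construction reduces to: exhibit in $B_s$ a collection of directed paths covering each arc once, in which each vertex of $B\setminus v$ is a start once and an end once, $v$ is a start once and an end once, **and** there is a path starting at $v$ whose first arc we may "cut" and a path ending at $v$ whose last arc we may "cut" so that after cutting, $v$ loses one start-incidence and one end-incidence — equivalently, $B$ itself with $v$'s start/end multiplicities relaxed has such a decomposition. For $K_3$ on $\{v,x,y\}$: the arcs of $(K_3)_s$ are $vx,xv,vy,yv,xy,yx$; take the paths $x\!\to\!v\!\to\!y$, $y\!\to\!x$, and $v\!\to$ (a single arc $vx$? no) — I would just explicitly write three directed paths: $v\!\to\!x\!\to\!y$, $y\!\to\!v$, $x\!\to\!v\!\to\!y$ — then count and adjust; the point is this is a finite check on $6$ arcs, and the analogous finite check for $K_5$ on $20$ arcs (which, being $2$-connected with $|E|=10\le 2\cdot5-1=9$? — no, $10>9$, so Theorem~\ref{main}(v) does not apply, consistent with $K_5$ having no $\rm OPPDC$) must be done by hand or cited from the $K_5$ Hamiltonian-decomposition discussion preceding the theorem.

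The inductive step then runs: given $G$ with $k\ge 2$ blocks, pick a leaf block $B_k$ (a block containing exactly one cut vertex $v$); let $H$ be the union of the remaining blocks, which has $k-1$ blocks each of which has an $\rm OPPDC$ or is $K_3$ or $K_5$, hence has an $\rm OPPDC$ by the induction hypothesis — with the caveat that $H$ could be a single block equal to $K_3$, i.e. $G$ could be $K_3 \cup B_k$; this degenerate base case, and the symmetric one, I would handle directly by the same finite gluing argument with the roles reversed. Then apply the merging lemma to attach $B_k$ to $H$ at $v$.

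The main obstacle is the finite case analysis for $K_5$: unlike $K_3$, it is not obvious by inspection that $(K_5)_s$ admits a path system with the required relaxed multiplicities at a single distinguished vertex $v$, and one must exhibit it explicitly (or derive it from the decomposition of $(K_6)_s$ into directed Hamiltonian cycles mentioned above, by deleting a non-$v$ vertex and then surgically opening the cycles at $v$). Everything else — the splicing at the cut vertex, the bookkeeping that start/end multiplicities stay correct, and the induction on block count — is routine once the two base constructions for $K_3$-blocks and $K_5$-blocks are in hand.
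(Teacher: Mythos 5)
Your overall strategy coincides with the paper's: induct on the number of blocks, reduce to gluing a single $K_3$- or $K_5$-block onto the rest at a cut vertex $v$, and do the gluing by a ``relaxed'' path system on the bad block whose surplus start/end at $v$ is absorbed by $P_v$ and $P^v$ of the other side. (As stated, your relaxation is garbled: a system in which \emph{every} vertex, including $v$, begins once and ends once would already be an $\rm OPPDC$ of $K_3$ or $K_5$, which does not exist. The correct relaxation, implicit in the paper's constructions, is that $v$ begins \emph{two} segments and ends \emph{two} segments, one of each being consumed by concatenation with $P_v$ and $P^v$.) However, there are two concrete gaps. First, the $K_5$ witness, which you yourself identify as the crux, is not exhibited, and your proposed shortcut cannot work: Tillson's theorem gives a Hamiltonian decomposition of $K^{*}_{2m}$ only for $2m\ge 8$, and $(K_6)_s$ is a genuine exception, so there is no decomposition of $(K_6)_s$ into directed Hamiltonian cycles to delete a vertex from. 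The paper instead starts from an explicit $\rm OPPDC$ $\mathcal{P}'$ of $K_5\setminus\{uv\}$ with $v$ the cut vertex, and routes the two missing arcs $uv$ and $vu$ \emph{through the neighbouring block}: $P_{2v}vuP'^u$ and $uvP_2^v$. That device --- deleting an edge incident with the cut vertex and re-covering its two arcs via the other block's paths --- is the missing idea, and it is also what makes the edge-cut lemmas later in the paper work.

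Second, the cases where \emph{both} blocks at a cut vertex lack an $\rm OPPDC$ ($K_3\cup K_3$, $K_5\cup K_3$, $K_5\cup K_5$) are not covered by ``the same gluing argument with the roles reversed'': your merging lemma presupposes an $\rm OPPDC$ on one side into which to splice, and here neither side provides one. These require separate explicit constructions (the paper lists all three), after which the induction for $k\ge 3$ goes through exactly as you describe, since removing a leaf block from a graph with at least three blocks leaves a graph with at least two blocks, to which the induction hypothesis applies.
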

\begin{proof}{
Suppose $G=B_1\cup\ldots\cup B_k$ and  $B_i$'s are blocks of $G$.
If every block of $G$ has an $\rm OPPDC$, then by Theorem~\ref{main}(vi),
 $G$ also has an $\rm OPPDC$. Otherwise, at least one of $B_i$'s is $K_3$
or $K_5$. We prove the statement by induction on $k$.  \\
 If $k=2$ and $B_1=B_2=K_3$, where  $V(B_1)=\{u,v,w\}$ and $V(B_2)=\{w,x,y\}$,
then, $\mathcal{P}=\{uwxy,ywvu,xw,wuv,vwyx\}$ is an $\rm OPPDC$ of $G$.\\
If $k=2$, $B_1=K_5$ and $B_2=K_3$, where $V(B_1)=\{u,v,w,x,y\}$
and $V(B_2)=\{v,s,t\}$. Let $G'=B_1\setminus \{e=uv\}$. Then the
following is an $\rm OPPDC$ of $G'$,
$$\mathcal{P'}=\{uyxw,yvwux,wxvyu,xywv,vxuwy\}.$$
Consider four new directed paths.
 $P^t=tsvuP'^u$, $P_t=vt$, $P_s=uvs$, and $P^s=stvP'^v$.
The following is an $\rm OPPDC$ of $G$,
$$\mathcal{P}=\mathcal{P'}\cup\left\{P^t,P^s,P_t,P_s\right\}\setminus \left\{P'^u,P'^v\right\}.$$
If $k=2$ and $B_1=B_2=K_5$, where $V(B_1)=\{u,v,w,x,y\}$ and $V(B_2)=\{u',v,w',x',y'\}$.
 Then the following is an $\rm OPPDC$ of $G$,

\hspace{-6mm}
$\mathcal{P}=\{uxwyvy'w'x'u',ywxuvu'x'w'y',x'vx,u'vu,xyuwvw'u'y'x',wuyxvx'y'u'w',w'vw,\\
\hspace*{11mm}y'v,vy\}.$\\
Now let $G=G_1\cup G_2$, where $G_1=K_3$ and $G_2$ has an  $\rm
OPPDC$. Assume that $V(K_3)=\{u,v,w\}$, $v$ is a cut vertex,  and
 $\mathcal{P}_2$ is an $\rm OPPDC$ of $G_2$. Now we define four
new directed paths $P_u=P_{2v}vwu$, $P^u=uv$, $P^v=vuw$, and
$P^w=wvP^v_2$. Therefore,
$$\mathcal{P}=\mathcal{P}_2\cup \left\{P_u,P^u,P^v,P^w\right\}\setminus \left\{P^v_2,P_{2v}\right\}$$
is an $\rm OPPDC$ of $G$.\\
Finally, let  $G=G_1\cup G_2$, where  $G_1=K_5$ and $G_2$ has an
$\rm OPPDC$. Assume that $V(K_5)=\{u,v,w,x,y\}$, $v$ is a cut
vertex, and $\mathcal{P}_2$ is an $\rm OPPDC$ of $G_2$. Also let
$\mathcal{P}'$ be the $\rm OPPDC$ of $K_5\setminus \{e=uv\}$ as
given in above. Consider two new directed paths
$P_w=P_{2v}vuP'^u$ and $P^u=uvP^v_2$. Thus,
$$\mathcal{P}=\mathcal{P}'\cup \mathcal{P}_2\cup\left\{P_w,P^u\right\}\setminus \left\{P^v_2,P_{2v},P'^u\right\}$$
is an $\rm OPPDC$ of $G$.

For $k\geq 3$, the result is obtained by the induction hypothesis,
Theorem~\ref{main}(vi) and the above argument.
 }\end{proof}
Theorem~\ref{block} implies that the minimal counterexample
 to the $\rm OPPDC$ conjecture has no cut vertex.
\begin{cor}\label{k=2}
The minimal counterexample to the $\rm OPPDC$ conjecture is $2$-connected.
\end{cor}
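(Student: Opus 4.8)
The plan is to derive Corollary~\ref{k=2} directly from Theorem~\ref{block} together with the block decomposition of graphs. Let $G$ be the minimal counterexample to the $\rm OPPDC$ conjecture, so $G$ is connected, $G\ne K_3$, $G\ne K_5$, and $G$ has no $\rm OPPDC$, while every connected graph with fewer vertices (other than $K_3$, $K_5$) does have one. Suppose for contradiction that $G$ is not $2$-connected. Then $G$ has a cut vertex, so its block-tree is nontrivial and we may write $G=B_1\cup\cdots\cup B_k$ with $k\ge 2$, where the $B_i$ are the blocks of $G$.

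The key step is to show that each block $B_i$ satisfies the hypothesis of Theorem~\ref{block}, namely that $B_i$ has an $\rm OPPDC$ or $B_i\in\{K_3,K_5\}$. Since $k\ge 2$, each block $B_i$ has strictly fewer vertices than $G$ (a cut vertex lies outside at least one block, and every block is a connected subgraph). A block is either $2$-connected or a single edge $K_2$. If $B_i$ is a single edge, it is a tree, hence a union of two trees, so by Theorem~\ref{main}(i) it has an $\rm OPPDC$. If $B_i$ is $2$-connected and $B_i\notin\{K_3,K_5\}$, then $B_i$ is a connected graph on fewer vertices than $G$ that is not $K_3$ or $K_5$, so by minimality of $G$ it has an $\rm OPPDC$. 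In every case, $B_i$ has an $\rm OPPDC$ or equals $K_3$ or $K_5$, so the hypotheses of Theorem~\ref{block} are met.

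Applying Theorem~\ref{block} now yields that $G$ itself has an $\rm OPPDC$, contradicting the assumption that $G$ is a counterexample. Hence $G$ has no cut vertex, and since $G$ is connected this means $G$ is $2$-connected.

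The only subtlety — and it is minor — is making sure that every block truly has fewer vertices than $G$ when $k\ge2$; this is standard, since if some block $B_i$ had $V(B_i)=V(G)$ then, as blocks of a connected graph pairwise share at most a single cut vertex, no other block could contain any vertex, forcing $k=1$. No hard obstacle arises here: the entire weight of the argument has already been carried by Theorem~\ref{block}, and this corollary is simply the observation that minimality lets us feed that theorem exactly the block hypotheses it requires.
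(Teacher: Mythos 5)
Your proposal is correct and follows essentially the same route as the paper: the paper likewise observes that a minimal counterexample with a cut vertex would decompose into blocks, each strictly smaller and hence (by minimality) having an $\rm OPPDC$ or being $K_3$ or $K_5$, so Theorem~\ref{block} yields an $\rm OPPDC$ for $G$, a contradiction. Your extra remarks (handling $K_2$ blocks via Theorem~\ref{main}(i) and checking that each block is strictly smaller) only make explicit details the paper leaves implicit.
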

Moreover, Theorem~\ref{block} directly concludes the following corollaries.
A {\sf block graph} is a graph for which each block is a clique.
\begin{cor}
Every block graph has an $\rm OPPDC$.
\end{cor}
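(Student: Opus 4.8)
The plan is to reduce the statement to Theorem~\ref{block} by checking that every graph that can occur as a block of a block graph --- namely, a complete graph $K_m$ --- either admits an $\rm OPPDC$ or is one of the two exceptional graphs $K_3$, $K_5$. Since by definition the blocks of a block graph are cliques, this classification of complete graphs is all that is needed.

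The case analysis runs over $m$. For $m=1$ the one-vertex graph has the trivial $\rm OPPDC$ consisting of the length-zero path at its vertex. For $m=2$ the two opposite arcs of the symmetric orientation form an $\rm OPPDC$ (each of the two vertices occurs once as a beginning and once as an end); note also that $K_2$ falls under Theorem~\ref{main}(ii). For even $m\ge 4$, every vertex of $K_m$ has odd degree $m-1$, so Theorem~\ref{main}(ii) gives an $\rm OPPDC$. For odd $m\ge 7$, write $m=2n+1$ with $n\ge 3$ and invoke the known decomposition of the symmetric orientation of $K_{2n+2}$ into $2n+1$ directed Hamiltonian cycles~\cite{MR584161}; deleting a fixed vertex from each cycle yields an $\rm OPPDC$ of $K_{2n+1}$, exactly as recalled earlier in this section. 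The only remaining cases are $m=3$ and $m=5$, which are precisely the graphs permitted as exceptional blocks in the hypothesis of Theorem~\ref{block}.

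Having verified that every block $B_i$ of a block graph $G$ either has an $\rm OPPDC$ or is $K_3$ or $K_5$, the conclusion follows immediately from Theorem~\ref{block}. I do not expect any genuine obstacle here: the real work is already packaged inside Theorem~\ref{block} together with the previously established existence of $\rm OPPDC$s for all complete graphs other than $K_3$ and $K_5$. The one point requiring mild care is not to overlook the small blocks $K_1$ and $K_2$ when asserting that every clique block is covered by the hypothesis of Theorem~\ref{block}.
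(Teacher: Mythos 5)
Your proposal is correct and matches the paper's (implicit) argument: the paper derives this corollary directly from Theorem~\ref{block}, using exactly the facts recalled in Section~\ref{sec:MCOPPDC} that $K_{2n}$ has an $\rm OPPDC$ by Theorem~\ref{main}(ii) and $K_{2n+1}$, $n\ge 3$, has one via the Hamiltonian decomposition of the symmetric orientation of $K_{2n+2}$, with $K_3$ and $K_5$ absorbed by the hypothesis of Theorem~\ref{block}. Your explicit handling of the small blocks $K_1$ and $K_2$ is a reasonable added precaution, not a deviation.
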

Since the line graph of every tree is a block graph,
we have the following corollary.
\begin{cor}\label{L(T)}
For every tree $T$, $L(T)$  has an $\rm OPPDC$.
\end{cor}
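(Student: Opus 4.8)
The plan is to obtain Corollary~\ref{L(T)} at once from the preceding corollary: the whole task reduces to the classical structural fact that, for any tree $T$, the line graph $L(T)$ is a block graph, after which ``every block graph has an $\rm OPPDC$'' applies directly. So the content of the proof is purely that structural fact, and the reduction itself is a single line.

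To verify that $L(T)$ is a block graph I would first pin down the clique structure of $L(T)$. For a vertex $v$ of $T$ let $S_v$ be the set of edges of $T$ incident with $v$; then $S_v$ induces a complete subgraph $K_{d_T(v)}$ of $L(T)$, and conversely, since $T$ is acyclic and hence triangle-free, any three pairwise-adjacent edges of $T$ must have a common endpoint, so every clique of $L(T)$ with at least three vertices equals some $S_v$, uniquely. Hence the maximal cliques of $L(T)$ are exactly the $S_v$ with $d_T(v)\ge 2$, and two distinct ones $S_u,S_v$ meet in at most one vertex of $L(T)$ — the edge $uv$ of $T$, when it exists. I would then invoke the forbidden-subgraph description of block graphs (no induced $K_4-e$ and no induced cycle of length $\ge 4$). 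No induced diamond: the two triangles of a diamond would lie in stars $S_u,S_v$ sharing two vertices of $L(T)$, forcing $u=v$, and then those four vertices lie in one clique, so the supposedly missing edge is present after all. No induced cycle $e_1e_2\cdots e_ne_1$ with $n\ge 4$: consecutive edges $e_i,e_{i+1}$ share a single endpoint $w_i$ of $T$, and since the cycle is induced with $n\ge 4$ the edges $e_{i-1},e_i,e_{i+1}$ are not all concurrent, which forces $w_{i-1}\ne w_i$ and hence $e_i=w_{i-1}w_i\in E(T)$; then $w_0w_1\cdots w_{n-1}w_0$ is a closed trail of positive length in $T$ traversing the distinct edges $e_1,\dots,e_n$, which is impossible in a tree. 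Thus $L(T)$ is a block graph — equivalently, and more transparently, the cliques $S_v$ are precisely its blocks, glued along a tree structure mirroring $T$ — and applying the corollary that every block graph has an $\rm OPPDC$ completes the argument.

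The only step needing genuine care is the exclusion of long induced cycles: one must keep careful track of when consecutive edges of a cycle of $L(T)$ are forced to meet in a single vertex as against when they span an actual edge of $T$, in order to extract the impossible closed trail; everything else is bookkeeping about the $S_v$. I would also flag one boundary subtlety: $L(K_{1,3})=K_3$ and $L(K_{1,5})=K_5$, which are exactly the two block graphs with no $\rm OPPDC$, so the statement should strictly be read with the trivial exceptions $T=K_{1,3}$ and $T=K_{1,5}$ excluded (equivalently, assuming $L(T)\notin\{K_3,K_5\}$); for every other tree $T$ the argument above goes through verbatim and produces an $\rm OPPDC$ of $L(T)$.
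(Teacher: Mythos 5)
Your proof is correct and follows essentially the same route as the paper, which likewise deduces the corollary from the unproved remark that the line graph of a tree is a block graph together with the preceding corollary that every block graph has an $\rm OPPDC$; you simply fill in that structural fact in full. Your caveat about $T=K_{1,3}$ and $T=K_{1,5}$ (where $L(T)=K_3$ and $K_5$) is a legitimate point the paper glosses over, since its block-graph corollary (and Theorem~\ref{block} with $k=1$) inherits exactly the same two exceptions.
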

For line graphs the following  result is also obtained from  Theorem~\ref{main}(ii).
\begin{cor}
If the degree of no adjacent vertices in $G$ have the  same
parity, then the line graph $L(G)$ has an $\rm OPPDC$.
\end{cor}
The following lemmas are necessary to prove our next theorem.
\begin{lemma}\label{K5=K5}
If  $G_1=G_2=K_5$ and $G=G_1\cup G_2\cup \{ uu', vv'\}$, where
$\{u,v\}\in V(G_1)$ and $\{u',v'\}\in V(G_2)$, then $G$ has an $\rm OPPDC$.
\end{lemma}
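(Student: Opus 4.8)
The plan is to prove the lemma by an explicit construction, in the spirit of the proof of Theorem~\ref{block}. Write $V(G_1)=\{u,v,w,x,y\}$ and $V(G_2)=\{u',v',w',x',y'\}$; these two vertex sets are disjoint, $|V(G)|=10$, and $E(G)$ is the disjoint union of $E(G_1)$, $E(G_2)$ and $\{uu',vv'\}$. The point of departure is the $\rm OPPDC$ $\mathcal{P}'=\{uyxw,\ yvwux,\ wxvyu,\ xywv,\ vxuwy\}$ of $K_5\setminus\{uv\}$ that already appears in the proof of Theorem~\ref{block}. Applying it to $G_1\setminus\{uv\}$, and applying its primed copy to $G_2\setminus\{u'v'\}$, produces collections $\mathcal{P}_1'$ and $\mathcal{P}_2'$ that together cover every arc of $(G_1)_s$ except $uv,vu$ and every arc of $(G_2)_s$ except $u'v',v'u'$, each covered arc exactly once, and in which every vertex of $G_i$ starts exactly one path of $\mathcal{P}_i'$ and ends exactly one path of $\mathcal{P}_i'$. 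So the whole task reduces to incorporating the eight still-uncovered arcs $uv,vu,u'v',v'u',uu',u'u,vv',v'v$ while keeping, for each vertex, the number of paths it starts and the number it ends both equal to one.

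First I would splice across the bridging edge $uu'$. Let $B$ be the path of $\mathcal{P}_1'$ ending at $u$ and $A$ the path of $\mathcal{P}_1'$ starting at $u$; let $D$ be the path of $\mathcal{P}_2'$ ending at $u'$ and $C$ the path of $\mathcal{P}_2'$ starting at $u'$. These four paths are pairwise distinct, since $A\ne B$ and $C\ne D$ (a path cannot have equal ends) and $\mathcal{P}_1'$, $\mathcal{P}_2'$ live on disjoint vertex sets. Remove them and insert instead $Q_1$, obtained by traversing $B$, then the arc $uu'$, then $C$, together with $Q_2$, obtained by traversing $D$, then the arc $u'u$, then $A$; since $V(G_1)\cap V(G_2)=\emptyset$, both $Q_1$ and $Q_2$ are genuine directed paths. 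After this exchange the arcs $uu'$ and $u'u$ are covered, every arc of $(G_1)_s\cup(G_2)_s$ is still covered exactly once, the collection has $10-4+2=8$ paths, the vertices $u$ and $u'$ no longer start or end any path, and every other vertex still starts one path and ends one path.

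Finally I would adjoin the two short directed paths $R_1=uvv'u'$ and $R_2=u'v'vu$. Here $R_1$ uses the arcs $uv$, $vv'$, $v'u'$ and runs from $u$ to $u'$, while $R_2$ uses $u'v'$, $v'v$, $vu$ and runs from $u'$ to $u$. Adjoining them raises the number of paths to $10=|V(G)|$, restores the start count and end count of $u$ and of $u'$ to one apiece, and supplies exactly the six remaining arcs $uv,vu,u'v',v'u',vv',v'v$. Hence the resulting family of $10$ directed paths covers each arc of $G_s$ exactly once and realizes each vertex once as a start and once as an end, so it is an $\rm OPPDC$ of $G$.

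The construction is very short, so essentially all of the effort is verification, and I expect the only delicate points to be bookkeeping. Two things must be pinned down: that the stated $\mathcal{P}'$ really is an $\rm OPPDC$ of $K_5$ minus an edge (a direct check over its eighteen arcs and five start/end pairs), and that the four paths $A,B,C,D$ used in the splice are pairwise distinct so that $Q_1,Q_2$ are paths rather than closed walks. Once those are settled, the ``each arc exactly once'' requirement reduces to a finite inspection of the $44$ arcs of $G_s$, conveniently split into the arcs of $(G_1)_s$, the arcs of $(G_2)_s$, and the four bridging arcs; I do not anticipate any conceptual obstacle beyond this.
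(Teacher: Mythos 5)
Your construction is correct: the collection $\mathcal{P}'=\{uyxw, yvwux, wxvyu, xywv, vxuwy\}$ is indeed an $\rm OPPDC$ of $K_5\setminus\{uv\}$ (all $18$ arcs covered once, each vertex once a start and once an end), your spliced paths $Q_1=wxvyuu'y'x'w'$ and $Q_2=w'x'v'y'u'uyxw$ are genuine directed paths because the two vertex sets are disjoint, and together with the three unremoved paths on each side and $R_1=uvv'u'$, $R_2=u'v'vu$ you get ten directed paths covering all $44$ arcs of $G_s$ exactly once with correct start/end counts at every vertex. However, your route is genuinely different from the paper's: for this lemma the paper simply exhibits an ad hoc list of ten directed paths in $G$ (several of which cross between the two copies) and leaves the reader a brute-force check, whereas you reuse the $K_5-e$ gadget from the proof of Theorem~\ref{block} on both sides and perform a local splice at $uu'$ plus two bridging paths through $uv,vv',v'u'$. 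Your version is closer in spirit to the paper's own proofs of Lemma~\ref{G=K5} and Theorem~\ref{K'=3}, which use exactly this kind of path surgery at the attachment vertices; it also makes the verification modular (the only global check is the $K_5-e$ cover) and in fact proves a more general statement, namely that $G_1\cup G_2\cup\{uu',vv'\}$ has an $\rm OPPDC$ whenever $uv\in E(G_1)$, $u'v'\in E(G_2)$ and both $G_1\setminus uv$ and $G_2\setminus u'v'$ admit $\rm OPPDC$s, of which the lemma is the special case $G_1=G_2=K_5$. The paper's explicit list is shorter to state but carries no structure; your argument trades a few lines of bookkeeping for transparency and reusability.
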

\begin{proof}{
Let $G_1=G_2=K_5$,  $V(G_1)=\{u,v,w,x,y\}$, and
$V(G_2)=\{u',v',w',x',y'\}$.
 Therefore the following is an $\rm OPPDC$ of $G=G_1\cup G_2\cup \{ uu', vv'\}$.\\
$\mathcal{P}=\{uxywvv'y'u'x'w',xvwu,wxuvy,yuu',vuwyx,v'x'y'w'u'uyvxw,x'u'w'v',w'x'v'u'y',\\
\hspace*{11mm}y'v'v,u'v'w'y'x'\}.$
}\end{proof}
\begin{lemma}\label{G=K5}
Let $G_1=K_5$ and $G_2$ be a graph with an $\rm OPPDC$.
If $G=G_1\cup G_2\cup \{ uu', vv'\}$, where $\{u,v\}\in V(G_1)$
and $\{u',v'\}\in V(G_2)$, then $G$ has an $\rm OPPDC$.
\end{lemma}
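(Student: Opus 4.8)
The plan is to mimic the structure of the proof of Lemma~\ref{K5=K5}, but now the second $K_5$ is replaced by an arbitrary graph $G_2$ carrying an $\rm OPPDC$ $\mathcal{P}_2$. As in several earlier arguments (e.g.\ the $K_5$ cases of Theorem~\ref{block}), the idea is to first delete an edge from the $K_5$ block so that the remaining graph has an $\rm OPPDC$: we take $G_1'=K_5\setminus\{e=uv\}$ with the explicit $\rm OPPDC$ $\mathcal{P}'=\{uyxw,\,yvwux,\,wxvyu,\,xywv,\,vxuwy\}$ already used in the proof of Theorem~\ref{block}. So on the disjoint union $G_1'\cup G_2$ we have the $\rm OPPDC$ $\mathcal{P}'\cup\mathcal{P}_2$. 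It remains to splice in the four arcs $uv$, $u'v'$ (wait — in $G$ the new edges are $uu'$ and $vv'$, and $uv\in E(G_1)$, $u'v'\in E(G_2)$ are already present), so more precisely I must account for the edges $uv$, $u'v'$, $uu'$, $vv'$ and fix the begin/end bookkeeping at $u,v,u',v'$.

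First I would record which paths of $\mathcal{P}'$ begin or end at $u$ and at $v$: from the list, $P'^u=uyxw$ and $P'^v=vxuwy$ are the paths starting at $u$ and $v$, while $P'_u=wxvyu$ and $P'_v=xywv$ are the paths ending at $u$ and $v$ (one checks $yvwux$ starts at $y$, ends at $x$, so it is untouched). From $\mathcal{P}_2$ take the paths $P_2^{u'},P_2^{v'}$ beginning at $u'$, $v'$ and $P_{2u'},P_{2v'}$ ending there. Now I build new directed paths that (a) traverse each of $uv$, $u'v'$, $uu'$, $vv'$ exactly once and (b) leave every vertex with exactly one path-start and one path-end. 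A natural choice: concatenate $P_{2u'}u'u P'^u$ (uses edge $u'u$, ends where $P'^u$ ended, i.e.\ at $w$), concatenate $P'_u u v v' P_2^{v'}$ (uses $uv$ and $vv'$), and similarly pair up the remaining ends using the arc $v'v$ or $vv'$ in the other orientation and the arc $uu'$ in the other orientation — since $G_s$ is the symmetric orientation, each of the four edges contributes two opposite arcs, so eight arc-traversals must be distributed. I would write out four or five explicit concatenations, each gluing an $\mathcal{P}_2$-path to an $\mathcal{P}'$-path through one or two of the four connecting arcs, chosen so that the multiset of begin-vertices and end-vertices of the new family, together with the untouched paths, is exactly $V(G)$ once each.

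The verification then splits into three routine checks: every arc of $G_s$ is covered exactly once (the arcs inside $G_1\setminus\{uv\}$ by $\mathcal{P}'$, the arcs inside $G_2$ by $\mathcal{P}_2$, and the eight arcs over $\{uv,u'v',uu',vv'\}$ by the new concatenations — here one must be careful that both orientations of $uv$ and of $u'v'$ get used, replacing the single traversals that $\mathcal{P}'$ resp.\ $\mathcal{P}_2$ would have given if those edges were present, which is why we deleted $uv$ from $K_5$ up front but did \emph{not} delete $u'v'$ from $G_2$, so the count there needs the new paths to supply only the \emph{second} copy — I would adjust the splicing so that exactly one of the two $u'v'$-arcs comes from $\mathcal{P}_2$ and one from a new path); each vertex appears once as a start and once as an end; and no path of length zero is created except possibly trivial ones, which Lemma~\ref{main} and the conventions allow. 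The main obstacle is purely combinatorial bookkeeping at the four special vertices: getting the orientations of the connecting arcs consistent with which old paths we are extending, so that neither $uv$ nor $u'v'$ is covered twice or zero times and the start/end parity at $u,v,u',v'$ comes out exactly right; once a consistent assignment is found (and the case $G_2=K_5$ handled in Lemma~\ref{K5=K5} is a good sanity check), everything else is immediate.
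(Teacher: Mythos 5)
Your overall strategy is exactly the paper's: delete the edge $uv$ from the $K_5$, reuse the explicit $\rm OPPDC$ $\mathcal{P}'$ of $K_5\setminus\{uv\}$ from the proof of Theorem~\ref{block}, take the union with $\mathcal{P}_2$, and splice a few new paths through the arcs of $uv$, $uu'$, $vv'$ while fixing the begin/end bookkeeping. However, the entire content of this lemma is the explicit splicing, and you never actually produce it: you say you ``would write out four or five explicit concatenations \ldots chosen so that'' everything works, which is precisely the step that needs to be exhibited and checked. Worse, the one concrete concatenation you do commit to, $P'_u\,u\,v\,v'\,P_2^{v'}$, is not a directed path: $P'_u=wxvyu$ already passes through $v$, so appending the arc $uv$ revisits $v$. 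So the sketch as written does not assemble into a proof.

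There is also a conceptual slip around $u'v'$. The hypothesis does not say $u'v'\in E(G_2)$; and even if it is an edge, $\mathcal{P}_2$ is an $\rm OPPDC$ of $G_2$, so it already covers \emph{both} arcs of $u'v'$ --- your claim that the new paths must ``supply the second copy'' of $u'v'$ is wrong and would lead to double-covering. The correct accounting is: the new paths must supply both arcs of $uv$ (since it was deleted from $K_5$), both arcs of $uu'$ and both arcs of $vv'$ (the new edges), and nothing else. A working choice, which is what the paper writes down, is $P_v=P_{2u'}u'uv$, $P^u=uu'$, $P_{v'}=P'_vvv'$, and $P_w=P_{2v'}v'vuP'^u$, replacing $P'^u$, $P'_v$, $P_{2u'}$, $P_{2v'}$; one checks these six arcs are each used once and the start/end multiset at $u,v,u',v',w,x$ is preserved. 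Without some such explicit, verified list, the proof is incomplete.
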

\begin{proof}{
Let $V(G_1)=\{u,v,w,x,y\}$, $\mathcal{P}'$ be the $\rm OPPDC$ of $G_1\setminus \{e=uv\}$
given in the proof of Theorem~\ref{block}, and $\mathcal{P}_2$ be an $\rm OPPDC$ of $G_2$.
Now set four new directed paths. $P_v=P_{2u'}u'uv$, $P^u=uu'$, $P_{v'}=P'_vvv'$,
and $P_w=P_{2v'}v'vuP^{'u}$. Thus,
$$\mathcal{P}=\mathcal{P}'\cup \mathcal{P}_2\cup\left\{P^u,P_v,P_w,P_{v'}\right\}\setminus \left\{P'^u,P'_v,P_{2u'},P_{2v'}\right\}$$
is an $\rm OPPDC$ of $G$.
}\end{proof}
By Corollary~\ref{k=2}, the minimal counterexample to the $\rm OPPDC$ conjecture
is bridgeless, therefore if $G$ has an edge cut $F$ of size $2$,
then the edges of $F$ are vertex disjoint. In the next theorem,
we show that $G$ has no vertex disjoint edge cut of size $2$.
\begin{thm}\label{K'=3}
The minimal counterexample to the $\rm OPPDC$ conjecture is
$3$-edge-connected.
\end{thm}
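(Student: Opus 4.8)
The plan is to argue by contradiction: let $G$ be the minimal counterexample to the $\rm OPPDC$ conjecture, and suppose $G$ has a vertex-disjoint edge cut $F=\{e_1,e_2\}$ of size $2$ (by Corollary~\ref{k=2} and the remark preceding the theorem, $G$ is bridgeless, so any $2$-edge-cut consists of two independent edges). Write $G-F=G_1\cup G_2$ with $e_1=u_1u_1'$, $e_2=v_1v_1'$, where $u_1,v_1\in V(G_1)$ and $u_1',v_1'\in V(G_2)$; note $u_1\ne v_1$ and $u_1'\ne v_1'$ since $F$ is vertex-disjoint. The idea is to form from each side a slightly smaller graph that is forced (by minimality of $G$) to have an $\rm OPPDC$, then splice those covers together across $e_1,e_2$.

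The natural reduction is: let $H_i$ be obtained from $G_i$ by adding the edge $f_i$ joining the two attachment vertices on that side, i.e. $H_1=G_1\cup\{u_1v_1\}$ and $H_2=G_2\cup\{u_1'v_1'\}$ (if that edge is already present, one must instead contract or use a different gadget — this is one of the places to be careful). Each $H_i$ is $2$-connected, has fewer vertices than $G$, and is not $K_3$ or $K_5$ unless it is one of the small exceptional cases, which we handle separately exactly as in the proof of Theorem~\ref{block} and in Lemmas~\ref{K5=K5} and \ref{G=K5} (indeed those two lemmas are precisely the $G_1=K_5$ subcases of this theorem). So by minimality each $H_i$ has an $\rm OPPDC$, say $\mathcal{Q}_i$. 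Now the splicing: in $\mathcal{Q}_1$ the edge $f_1=u_1v_1$ lies in exactly one directed path, which we may take to be traversed from $u_1$ to $v_1$; deleting the arc $u_1v_1$ breaks that path into $R^{v_1}$ (ending at $v_1$, possibly trivial) and $R_{u_1}$ (beginning at $u_1$). Do the same on side $2$ with $f_2=u_1'v_1'$, obtaining $S^{v_1'}$ and $S_{u_1'}$. Reassemble using the two real edges $e_1,e_2$ of $G_s$: the four new directed paths are
$$R_{u_1}\,u_1u_1'\,S_{u_1'},\qquad S^{v_1'}\,v_1'v_1\,R^{v_1},$$
and the two reverse-direction paths obtained symmetrically from the oppositely-oriented arcs $u_1'u_1$ and $v_1v_1'$ together with the reverses of the pieces coming from the other occurrence of $f_1,f_2$ in $\mathcal{Q}_1,\mathcal{Q}_2$. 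One checks that every arc of $G_s$ is covered exactly once (the arcs of $f_1$ in both directions are removed and replaced by the arcs of $e_1,e_2$) and that each vertex begins exactly one and ends exactly one path in the new collection, the end/beginning bookkeeping at $u_1,v_1,u_1',v_1'$ working out because $f_i$ contributed one beginning and one end at each of its endpoints in each of the two directions.

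The main obstacle is the degenerate cases where the added edge $f_i$ already exists in $G_i$, or where a side is so small that $H_i\in\{K_3,K_5\}$ — there the clean "add an edge, apply minimality" step fails, and one must supply explicit $\rm OPPDC$s or invoke Theorem~\ref{block} and Lemmas~\ref{K5=K5}, \ref{G=K5}. A secondary subtlety is the orientation bookkeeping: one has to track both occurrences (the two opposite arcs) of $f_1$ in $\mathcal{Q}_1$ and make sure the two spliced directed paths and their two mirror images use consistent orientations across $e_1$ and $e_2$ so that no vertex ends up as the beginning (or end) of two paths; choosing which arc of $f_1$ to route "through $e_1$" versus "through $e_2$" is what makes this consistent, and it is the only genuinely delicate part of the verification.
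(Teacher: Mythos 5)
Your generic splice is verifiable, but the route you take is not the paper's, and the cases you explicitly defer are exactly where the content of this theorem lies; as written they are genuine gaps. Concretely: (i) if $u_1v_1\in E(G_1)$ (which happens in particular, but not only, when $G_1$ is complete), your graph $H_1$ is not simple and your construction does not apply; ``one must instead contract or use a different gadget'' is not an argument, and since your whole splice consists of deleting the two arcs of the added edge $f_1$ and rerouting them through $e_1,e_2$, no contraction-based substitute is evident --- what is needed there is a splice that works directly with an $\rm OPPDC$ of $G_1$, which you never supply. (ii) If $H_i=K_5$, i.e.\ $G_i$ is $K_5$ minus the added edge, minimality tells you nothing about $H_i$, and Lemmas~\ref{K5=K5} and~\ref{G=K5} do not cover this configuration either: they treat a \emph{component} equal to $K_5$, not $K_5$ less an edge. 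Your parenthetical claim that those lemmas ``are precisely the $G_1=K_5$ subcases'' is accurate only for case (i) with $G_1$ complete. There is also an orientation slip: the pieces obtained by deleting the arcs $v_1u_1$ and $v_1'u_1'$ must be concatenated as they stand; taking ``reverses of the pieces,'' as you write, would cover some arcs of $G_s$ twice and others not at all.

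For comparison, the paper never adds auxiliary edges. It applies minimality to the components $G_1,G_2$ of $G\setminus F$ themselves: since $\delta(G)\ge 4$ (Theorem~\ref{4}) and each component contains only two endpoints of $F$, each component has minimum degree at least $3$, so a component without an $\rm OPPDC$ must be $K_5$, and those situations are exactly Lemmas~\ref{K5=K5} and~\ref{G=K5}. When both components have $\rm OPPDC$s $\mathcal{P}_1,\mathcal{P}_2$, the splice is simpler than yours and needs no cutting: extend $P_{1u}$ across $uv$ into $P_2^v$, add the one-arc path $vu$, and do the same at $wx$; the begin/end bookkeeping is then immediate. This edge-free splice sidesteps all of your degenerate cases at once, which is why the paper's case analysis closes while yours, in its present form, does not; the quickest repair of your write-up is to drop the added edges $f_1,f_2$ and use the component-level splice throughout.
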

\begin{proof}{
Let $G$ be the minimal counterexample to the $\rm OPPDC$
conjecture. Suppose, on the contrary,  that $G$ has an edge cut of
size $2$,  say $F$. By Corollary~\ref{k=2}, $F$ is vertex
disjoint.
 Let $F=\{uv,wx\}$, and $G_1$ and $G_2$ be the components of
 $G\setminus F$ such that $u,w\in V(G_1)$.

 If $G_1$ and $G_2$
have no $\rm OPPDC$, then by minimality of $G$ and by Theorem~\ref{4},
 $G_1$ and $G_2$ are isomorphic to $K_5$. Therefore by Lemma~\ref{K5=K5},
$G$ has an $\rm OPPDC$ which is a contradiction. Now without loss
of generality, suppose that only $G_1$ has an $\rm OPPDC$. By
minimality of $G$ and  Theorem~\ref{4}, $G_2$ is isomorphic to
$K_5$; thus by Lemma~\ref{G=K5}, $G$ has an $\rm OPPDC$ which is
a contradiction.

It remains to consider the case that,
  $G_1$ and $G_2$ have an $\rm OPPDC$. Assume
that $\mathcal{P}_i$ is an $\rm OPPDC$ of $G_i$, $i=1,2$.
Now we define four new directed paths $P=P_{1u}uvP^v_2$, $P^v=vu$,
$P'=P_{1w}wxP^x_2$, and $P^x=xw$. Therefore,
$$\mathcal{P}=\mathcal{P}_1\cup \mathcal{P}_2\cup\left\{P,P',P^v,P^x\right\}\setminus \left\{P_{1u},P_{1w},P^v_2,P^x_2\right\}$$
is an $\rm OPPDC$ of $G$. This contradiction implies that $G$
is $3$-edge-connected.
}\end{proof}
%%%%%%%%%%%%%%%%%%%%%%%%%%%%%%%%%%%%%%%%%%%%%%%%%%%%%%%%%%%%%%%%%%%%%%%%%%%%%%%
\section{\large Some sufficient conditions for existence of an $\rm OPPDC$ }\label{sec:SEOPPDC}
In this section, we prove some sufficient conditions for a graph
to admit an $\rm OPPDC$.
Since the minimal counterexample to the $\rm OPPDC$ conjecture is
$2$-connected, first we consider the $\rm OPPDC$ conjecture for
$2$-connected graphs.

An {\sf ear-decomposition} of a $2$-connected graph $G$ is a
decomposition of $E(G)$ to subgraphs
$G_0=C_0\subset G_1\subset \ldots \subset G_k=G$ such that
$C_0$ is a cycle and for $i$, $2\le i\le k$,
$G_i\setminus G_{i-1}$ is a simple path in $G_i$, with
only two end vertices in $G_{i-1}$.
\begin{thm}\label{ear}
If a $2$-connected graph $G$ has an ear-decomposition
$G_0=C_0\subset G_1\subset \ldots \subset G_k=G$ such that
$G_i\setminus G_{i-1}=P_i$ is a path of length at least $2$, for
$i=1,\ldots,k$, and $C_0\ne K_3$, then $G$ has an $\rm OPPDC$.
\end{thm}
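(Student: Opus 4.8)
The plan is to induct on the number of ears $k$. The base case $k=0$ is immediate: $G_0 = C_0$ is a cycle that is not $K_3$, so $G$ is a cycle of length $n \ge 4$, which has an $\rm OPPDC$ --- indeed one can exhibit it directly (a single path traversing all but one edge in each direction, suitably arranged), or simply note that a cycle of length $\ge 4$ is covered by Theorem~\ref{main}(v), being a $2$-connected graph of order $n$ with $|E| = n \le 2n-1$. For the inductive step, suppose $G_{k-1}$ has an $\rm OPPDC$ $\mathcal{P}'$ (it is $2$-connected and, since it contains $C_0 \ne K_3$ and has an ear-decomposition with ears of length $\ge 2$, it is not $K_3$ or $K_5$; in fact $K_5$ has no such ear-decomposition since all its ``ears'' would be single edges). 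Write the new ear as $P_k = u\, z_1 z_2 \cdots z_{m-1}\, v$ with $m \ge 2$, where $u,v \in V(G_{k-1})$ and $z_1,\dots,z_{m-1}$ are new internal vertices.

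The key step is to modify $\mathcal{P}'$ to absorb the path $P_k$. Orient the new edges and distribute them among paths as follows. The internal vertices $z_1,\dots,z_{m-1}$ each need to occur exactly once as a beginning and once as an end. A clean way: in $G_s$ the ear contributes two directed paths $u z_1 \cdots z_{m-1} v$ and $v z_{m-1} \cdots z_1 u$. I would take the paths $P'^{\,v}$ (beginning at $v$) and $P'_u$ (ending at $u$) from $\mathcal{P}'$ and splice, forming the path $P'_u\, u\, z_1\, z_2\, \cdots\, z_{m-1}\, v$ followed by $P'^{\,v}$ --- but this overshoots, so instead one inserts the new arcs more carefully. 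Concretely, replace $P'_u$ by the path $\big(P'_u\big)\, u\, z_1 \cdots z_{m-1}\, v\, \big(P'^{\,v}\big)$ only when $m$ is such that the parity of the internal vertices works out; otherwise split the ear at an internal vertex. The robust construction, independent of $m$: let the ear in one direction, $u z_1 \cdots z_{m-1} v$, be appended to $P'_u$ to create a new path ending at $v$, call it $Q_1 = P'_u u z_1 \cdots z_{m-1} v$; let the ear in the reverse direction, $v z_{m-1} \cdots z_1 u$, be prepended to $P'^{\,v}$ to create $Q_2 = v z_{m-1} \cdots z_1 u \, P'^{\,v}$. Then $Q_1$ ends at $v$ and $Q_2$ begins at $v$; after this swap, $v$ still occurs once as an end (in $Q_1$) and once as a beginning (in $Q_2$), $u$ is unchanged as an endpoint-holder, and each internal vertex $z_i$ occurs exactly once as a beginning (in $Q_2$, since the reversed ear passes through $z_i$ going ``outward'') --- wait, this needs the internal vertices to be endpoints, which they are not in $Q_1,Q_2$. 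So the genuinely correct scheme is: break the two directed traversals of the ear into $m$ pieces each, using the internal vertices as the required new beginnings and ends, exactly as Theorem~\ref{main}(iii) does for a single subdivided edge, iterated $m-1$ times; equivalently, apply Theorem~\ref{main}(iii) repeatedly starting from $G_{k-1} + (\text{one edge } uv)$, provided that graph has an $\rm OPPDC$.

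So the real reduction is: it suffices to show $G_{k-1} \cup \{uv\}$ has an $\rm OPPDC$ (then adding the path $P_k$ of length $m$ amounts to subdividing that edge $m-1$ times, and Theorem~\ref{main}(iii) finishes it). If $uv \notin E(G_{k-1})$, build an $\rm OPPDC$ of $G_{k-1}+uv$ directly by taking an $\rm OPPDC$ of $G_{k-1}$ and splicing $P'_u$ and $P'^{\,v}$ across the single new arc $uv$ (and $P'_v$, $P'^{\,u}$ across $vu$); a short check shows each vertex's beginning/ending count is preserved. If $uv$ is already an edge of $G_{k-1}$, then $G$ is a subdivision of $G_{k-1}$ with an extra parallel-type ear, and one handles it by splicing as above but routing the ear through a fresh internal vertex --- since $m \ge 2$ there is at least one internal vertex $z_1$, so $G$ contains an $\rm OPPDC$-friendly subdivision of $G_{k-1}$. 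The main obstacle I anticipate is precisely this bookkeeping: ensuring that after all the splicing every vertex --- old endpoints $u,v$, the new internal vertices, and everyone untouched --- still serves as a beginning exactly once and an end exactly once, and that no edge of $G_s$ is covered twice or missed. This is routine but fiddly, and the cleanest write-up is to isolate it as the single-edge-addition lemma above and then invoke Theorem~\ref{main}(iii) for the subdivisions, so that the induction is over one ear at a time with the length-$\ge 2$ hypothesis used only to guarantee an internal vertex exists.
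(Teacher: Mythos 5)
Your base case is fine, and your instinct that the ear should be absorbed one internal vertex at a time is the right one, but the inductive step as you finally anchor it rests on a false lemma. You reduce to the claim that $G_{k-1}\cup\{uv\}$ has an $\rm OPPDC$, and you justify this by ``splicing $P'_u$ and $P'^{\,v}$ across the new arc $uv$ (and $P'_v$, $P'^{\,u}$ across $vu$),'' asserting that endpoint counts are preserved. They are not: after the splice, $u$ no longer occurs as the end of any path and no longer occurs as the beginning of any path (likewise $v$), the number of paths drops from $n$ to $n-2$, and the concatenation need not even be a simple path. More decisively, no repair of this splice can work in general, because the single-edge-addition statement itself is false: $K_5\setminus\{uv\}$ has an $\rm OPPDC$ (one is written out explicitly in the proof of Theorem~\ref{block}), yet $K_5$ has none. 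So ``it suffices to show $G_{k-1}\cup\{uv\}$ has an $\rm OPPDC$'' is a reduction you cannot discharge, and the hypothesis that every ear has length at least $2$ is there precisely so that one never has to add a bare edge.

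The correct route --- and the paper's actual argument --- is the one you sketched and then abandoned: since $P_k$ has length at least $2$, it has an internal vertex $z$ of degree $2$ in $G$. First pass from $G_{k-1}$ (which has an $\rm OPPDC$ by induction) to $G_{k-1}$ plus the length-$2$ ear $u\,z\,v$ using Theorem~\ref{main}(vii), which adds a vertex of degree less than $3$; then obtain the full ear $P_k$ by repeatedly subdividing an edge of that ear, using Theorem~\ref{main}(iii). This does all the beginning/end bookkeeping for you via the two quoted results and never requires an $\rm OPPDC$ of $G_{k-1}+uv$. With that replacement for your final paragraph, the induction goes through as intended.
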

\begin{proof}{
We prove the statement by induction on $k$.
For $k=0$, $G$ is a cycle and
 the following is an $\rm OPPDC$ of cycle  $C=[v_1,v_2,\ldots,v_n]$.
$$\mathcal{P}=\{v_nv_{n-1}, v_{n-1}v_{n-2}\ldots v_2v_1v_n, v_{n-2}v_{n-1}v_nv_1\}\cup\left(\cup_{i=1}^{n-3}\{v_iv_{i+1}\}\right).$$
 Now by induction on $k$ and by
Theorem~\ref{main}(vii) and (iii), an $\rm OPPDC$ of $G$ is
obtained.
\vspace{-1mm}
}\end{proof}
The following corollary provides a condition for every ear decomposition
 of the  minimal counterexample to the $\rm OPPDC$ conjecture.
\begin{cor}
Every ear-decomposition of the minimal counterexample to the
$\rm OPPDC$ conjecture has at least one ear of length $1$.
\end{cor}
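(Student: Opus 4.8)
The plan is to argue by contradiction, with Theorem~\ref{ear} doing all the work. Let $G$ be the minimal counterexample to the $\rm OPPDC$ conjecture, so $G$ is connected, $G\ne K_3$, $G\ne K_5$, and $G$ has no $\rm OPPDC$. Suppose toward a contradiction that some ear-decomposition of $G$ has \emph{all} its ears of length at least $2$; in fact it suffices to exhibit \emph{one} ear-decomposition of $G$ whose initial cycle $C_0$ is not a triangle and all of whose ears have length $\ge 2$, since Theorem~\ref{ear} then hands us an $\rm OPPDC$ of $G$, contradicting the choice of $G$.

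First I would record the two structural facts already available: $G$ is $2$-connected by Corollary~\ref{k=2}, and $\delta(G)\ge 4$ by Theorem~\ref{4}. In particular $G$ has more than three vertices, so it contains a cycle of length at least $4$: for instance, take a longest path $v_0v_1\cdots v_\ell$, observe that all $\ge 4$ neighbours of $v_0$ must lie on this path, pick a neighbour $v_j$ with $j\ge 4$, and read off the cycle $v_0v_1\cdots v_j v_0$. Call such a cycle $C_0$; by construction $C_0\ne K_3$.

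Next I would invoke the classical fact that a $2$-connected graph admits an ear-decomposition starting from any prescribed cycle, and apply it to $C_0$: write $C_0=G_0\subset G_1\subset\cdots\subset G_k=G$ with each $P_i:=G_i\setminus G_{i-1}$ a path having exactly two endpoints in $G_{i-1}$. If \emph{every} ear-decomposition of $G$ had all ears of length $\ge 2$, then in particular every $P_i$ would have length $\ge 2$; together with $C_0\ne K_3$, Theorem~\ref{ear} yields an $\rm OPPDC$ of $G$ — contradiction. Hence some ear-decomposition, and therefore (repeating the argument for an arbitrary one) every ear-decomposition, of $G$ contains at least one ear of length $1$.

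I do not expect a real obstacle; the only points deserving a word are (a) that $G$ has a non-triangular cycle, which follows from $\delta(G)\ge 4$ (or merely from $G$ being $2$-connected with $|V(G)|\ge 4$), and (b) that an ear-decomposition may be taken to begin at that cycle, which is standard. If one prefers to avoid the ``prescribed starting cycle'' formulation, the alternative is to start from the given all-long-ears ear-decomposition and re-root it when its initial cycle is a triangle $abc$ with first ear $P_1$ joining, say, $a$ and $b$: replace $C_0$ by $C_0':=P_1+ab$ and let the arc $a\,c\,b$ become the new first ear. This works verbatim when $P_1$ has length $\ge 3$ (then $C_0'\ne K_3$), and the residual case where $P_1$ has length exactly $2$ would need to be pushed one ear further — which is precisely why routing the argument through ``start from any cycle of length $\ge 4$'' is the cleaner path.
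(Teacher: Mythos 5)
There is a genuine logical gap, and it sits exactly at the quantifier. The corollary asserts that \emph{every} ear-decomposition of the minimal counterexample $G$ contains an ear of length $1$, so its negation gives you \emph{one} ear-decomposition $D$ all of whose ears have length $\ge 2$ (possibly with $C_0=K_3$), and you must derive a contradiction from the existence of that particular $D$. Your main route does not do this: you build a fresh ear-decomposition $D'$ rooted at a cycle of length $\ge 4$, but the hypothesis says nothing about the ear lengths of $D'$, and the pivotal sentence ``If every ear-decomposition of $G$ had all ears of length $\ge 2$\dots'' silently replaces ``some'' by ``every''. All that Theorem~\ref{ear} applied to $D'$ yields is that $D'$ itself must contain an ear of length $1$ (otherwise $G$ would have an $\rm OPPDC$); this is perfectly compatible with the existence of $D$ and produces no contradiction, and the closing parenthesis ``(repeating the argument for an arbitrary one)'' cannot rescue it, because the construction never engages with the arbitrary given decomposition. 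The only part of your write-up that does engage with $D$ is the re-rooting remark: if $C_0=K_3=abc$ and the first ear $P_1$ joins $a,b$ and has length $\ge 3$, replacing $C_0$ by $P_1+ab$ with new first ear $acb$ works; but you concede that the case where $P_1$ has length exactly $2$ is left open, and deferring it to the ``cleaner path'' is precisely the non sequitur above. So as written the proposal proves only that ear-decompositions of $G$ whose initial cycle is not a triangle (or whose first ear is long) must contain a length-$1$ ear, not the stated corollary.

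The paper itself offers no written proof and reads the corollary as the contrapositive of Theorem~\ref{ear} applied to the given decomposition (the $C_0=K_3$ case being glossed over), so noticing that case is to your credit; but the fix is easy and you should include it. For instance, count: in an ear-decomposition with initial cycle $C_0$ and $k$ ears, $|E(G)|-|V(G)|=k$, and if every ear has length $\ge 2$ then each ear contributes an internal vertex, so $k\le |V(G)|-|C_0|\le |V(G)|-3$ and hence $|E(G)|\le 2|V(G)|-3$; this contradicts Theorem~\ref{4}, since $\delta(G)\ge 4$ forces $|E(G)|\ge 2|V(G)|$. (Alternatively, $|E(G)|\le 2|V(G)|-3$ together with $2$-connectivity lets Theorem~\ref{main}(v) produce an $\rm OPPDC$ directly.) Either observation disposes of an arbitrary all-long-ear decomposition, triangle start included, and completes the proof.
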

\begin{thm}
Let $G$ be a connected graph. If $E(G)$ is partitioned to a cycle
$C$ of length at least $4$ and a connected graph $G'$ such that
$G'$ has an $\rm OPPDC$ and $|V(C)\setminus V(G')|\ge 2$, then $G$
also has an $\rm OPPDC$.
\end{thm}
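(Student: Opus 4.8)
I would argue by induction on $|V(C)|$, repeatedly deleting the vertices of $C$ lying outside $G'$ (each of degree $2$ in $G$) by means of Theorem~\ref{main}(iii) and Theorem~\ref{main}(vii), until I reach a base configuration that is dealt with by modifying an $\rm OPPDC$ of $G'$ directly. If $V(C)\cap V(G')=\emptyset$ then, $G$ being connected, $G=C$, a cycle of length $\ge 4$, which has an $\rm OPPDC$ by Theorem~\ref{main}(iv); if $|V(C)\cap V(G')|=1$, with common vertex $v$, then $v$ is a cut vertex, $G$ is the union of $C$ and $G'$ meeting only at $v$, both of which have an $\rm OPPDC$ (note $C\ne K_3$), and Theorem~\ref{main}(vi) finishes. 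So from now on $|V(C)\cap V(G')|\ge 2$; put $T=V(C)\setminus V(G')$, so $|T|\ge 2$.

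\noindent\textbf{Reduction.} Suppose some $u\in T$ has its two $C$-neighbours $a,b$ non-adjacent in $G$ — which is automatic if two vertices of $T$ are consecutive on $C$. Then $G$ arises from $H:=(G\setminus u)\cup\{ab\}$ by dividing the edge $ab$, while $H=C'\cup G'$ with $C'=(C\setminus u)\cup\{ab\}$ a cycle on $V(C)\setminus\{u\}$, so that $|V(C')\setminus V(G')|=|T|-1$ and $H$ is connected. If $|T|\ge 3$ then $|V(C')|=|V(C)|-1\ge 4$, so $H$ satisfies the hypotheses with a shorter cycle; by induction $H$ has an $\rm OPPDC$, and then so does $G$ by Theorem~\ref{main}(iii). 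Iterating, the problem is reduced to the case $|T|=2$, together with the special configuration in which every $u\in T$ has its two $C$-neighbours joined by an edge of $G'$ (so $u$ and those neighbours span a triangle with exactly one edge in $G'$). In that configuration I would instead delete the vertices of $T$ one at a time, each step legitimate by Theorem~\ref{main}(vii) — the deleted vertex has degree $2$, and no intermediate graph equals $K_3$ since $|V(G')|\ge 3$ there — reducing $C$ to a disjoint union of paths attached to $G'$; this is again of the form above, and is just $G'$ itself if every maximal run of $V(C)\cap V(G')$ along $C$ has length one, and otherwise can be handled by the variant of the construction below with $|T|$ in place of $2$.

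\noindent\textbf{Base case $|T|=2$.} Put $T=\{x,y\}$, list $V(C)\cap V(G')=\{w_1,\dots,w_s\}$ with $s\ge 2$ in the cyclic order in which they occur on $C$, and fix an $\rm OPPDC$ $\mathcal P'$ of $G'$; for a vertex $w$ let $P_w$ and $P^w$ denote its end-path and begin-path in $\mathcal P'$ (these are distinct and of positive length, since $G'$ is connected with $|V(G')|\ge 2$ and so no $w_i$ is isolated). The two directed traversals $\vec C^{+}$ and $\vec C^{-}$ of $C$ jointly use every arc of the symmetric orientation of $C$ exactly once. Cut each of them at the three vertices $x$, $y$, $w_1$: this gives six directed subpaths covering all the new arcs, and as each traversal carries two cut points outside $G'$, none of the pieces is a closed walk. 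Now splice the pieces into $\mathcal P'$: at $w_1$, concatenate the long piece of each traversal (the one running from $w_1$ through $w_2,\dots,w_s$) with $P^{w_1}$ or with $P_{w_1}$; and absorb the short pieces incident with $x$ or $y$ either by joining two of them, or by prepending (respectively appending) the single new vertex $x$ or $y$ to a piece already built. Because $x,y\notin V(G')$, every such prepend/append keeps the resulting path simple for free; this, together with length $\ge 4$ and $|V(C)\cap V(G')|\ge 2$, is precisely what the hypotheses provide. A tally of beginnings and ends shows the concatenations can be arranged so that each vertex is a beginning exactly once and an end exactly once; the sub-cases ``$x,y$ adjacent on $C$'' and ``$x,y$ non-adjacent on $C$'' run along the same lines, and the few coincidental or tiny configurations ($P_{w_1}=P^{w_1}$, some $w_i$ isolated in $G'$, or an intermediate graph equal to $K_3$) force $G$ to be a small $2$-connected graph covered by Theorem~\ref{main}(v).

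\noindent\textbf{Expected main obstacle.} The base case is the crux: one must write out the splicing explicitly, check that each path of $\mathcal P'$ consumed in the process is entirely re-absorbed and that every path produced is a genuine (vertex-)simple path, and isolate the handful of degenerate configurations. Some care is also needed for the special configuration arising in the reduction step, to confirm that peeling its degree-$2$ triangle vertices never produces a $K_3$ and always terminates at a graph already known to admit an $\rm OPPDC$.
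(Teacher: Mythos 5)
Your reduction via Theorem~\ref{main}(iii) is sound, but it only shifts all the weight onto the two irreducible situations, and those are exactly where the proposal has genuine gaps. In the base case $|T|=2$ the one substantive claim you make about the splicing is false: the assertion that simplicity comes ``for free'' because $x,y\notin V(G')$ covers only the prepending/appending of $x$ or $y$, not the splices at $w_1$, where a cycle piece containing the other vertices of $V(C)\cap V(G')$ is glued to $P^{w_1}$ or $P_{w_1}$, which may revisit them. Concretely, take $G'$ the path $w_1aw_2$ with $\rm OPPDC$ $\mathcal{P}'=\{w_1aw_2,\ w_2a,\ aw_1\}$ and $C=[w_1,w_2,x,y]$: cutting the reverse traversal at $x,y,w_1$ produces the piece $xw_2w_1$, and gluing it to $P^{w_1}=w_1aw_2$ gives $xw_2w_1aw_2$, which repeats $w_2$ and is not a path. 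The missing idea --- and it is the heart of the paper's proof --- is to splice into $\mathcal{P}'$ not at one fixed vertex $w_1$ but at two vertices $v_r,v_s\in V(C)\cap V(G')$ chosen in one segment determined by two outside vertices $v_i,v_j$, in effect the first $G'$-vertices met along the cycle from $v_j$ and from $v_i$, so that the glued cycle segments contain no other $G'$-vertices; one then uses both $P'^{v_r}$ and $P'^{v_s}$, and repairs the begin/end counts by separating the two paths carrying the backward traversal at the remaining outside vertices. Such a choice exists whenever some segment contains two $G'$-vertices, and the unique leftover case is the alternating $4$-cycle with $V(C)\cap V(G')=\{v_1,v_3\}$, which the paper settles by a second explicit construction. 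Your ``tally of beginnings and ends'' is asserted rather than carried out, and the example above is not among the degenerate configurations you propose to deflect to Theorem~\ref{main}(v).

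The special-configuration branch of your reduction is also unproved. To delete the vertices of $T$ one at a time by Theorem~\ref{main}(vii) you must in the end exhibit an $\rm OPPDC$ of $G\setminus T$, which is $G'$ together with the leftover cycle edges joining consecutive vertices of $V(C)\cap V(G')$; in general this is neither $G'$ nor of the form ``cycle plus $G'$'', so it is not ``again of the form above'', neither the theorem nor your induction applies to it, and the promised ``variant of the construction below with $|T|$ in place of $2$'' is never supplied. Moreover the parenthetical claim that no intermediate graph is $K_3$ because $|V(G')|\ge 3$ in that configuration is unjustified: $G'=K_2$ with $C=C_4$ alternating is a special configuration, $G=K_4$ minus an edge, and re-attaching either vertex of $T$ to $G'$ yields $K_3$ (that particular graph happens to satisfy Theorem~\ref{main}(v), but your argument does not detect this). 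By contrast the paper needs no induction at all: one direct construction handles every case in which two $G'$-vertices lie in a common segment, plus the single alternating $C_4$ case. The plan could likely be completed, but as written the two places requiring actual work are exactly the places left open.
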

\begin{proof}{
If $|V(C)\cap V(G')|=1$, then by Theorem~\ref{main}(vi), $G$ has
an $\rm OPPDC$. Now, suppose that $|V(C)\cap V(G')|\ge 2$. Let
$\mathcal{P'}$ be an $\rm OPPDC$ of $G'$ and
$C=[v_1,v_2,\ldots,v_k]$.

 If there exist two
vertices $v_i$ and $v_j$ , $i<j$, in $V(C)\setminus V(G')$ and
two vertices $v_r$ and $v_s$ in $V(C)\cap V(G')$  both are in the
same segment of $C$ divided by $v_i$ and $v_j$, then without loss
of generality, we can assume that
 $1\le i<j<r<s\le k$. Thus, we can  find an $\rm OPPDC$ for $G$ as
follows. Let $P^{v_i}=v_iv_{i-1}v_{i-2}\ldots v_sP'^{v_s},
P^{v_s}=v_sv_{s-1}\ldots v_i,\
 P^{v_j}=v_jv_{j+1}\ldots v_rP'^{v_r},$ and $P^{v_r}=v_rv_{r+1}\ldots v_j.$
Now, let $\mathcal{\widetilde{P}}^{v_i}$ and
$\mathcal{\widetilde{P}}^{v_s}$ be the collections of directed
paths obtained by separating the paths $P^{v_i}$ and $P^{v_s}$ on
the vertices of $V(C)\setminus (V(G')\cup\{v_j\})$. Thus,  the following is an
$\rm OPPDC$ of $G$,
$$\mathcal{P}=\mathcal{P'}\cup \mathcal{\widetilde{P}}^{v_i}\cup \mathcal{\widetilde{P}}^{v_s}
\cup\{P^{v_j},P^{v_r}\}\setminus \{P'^{v_r},P'^{v_s}\}.$$
 Otherwise,  $C=[v_1,v_2,v_3,v_4]$
and $V(C)\cap V(G')=\{v_1,v_3\}$. In this case, we define four new directed paths
 $P_{v_2}=v_1v_4v_3v_2,\ P^{v_2}=v_2v_1P'^{v_1},\ P^{v_4}=v_4v_1v_2v_3,$
and $P_{v_4}=P'_{v_3}v_3v_4.$ Now, the following is an $\rm OPPDC$ of $G$.
$$\mathcal{P}=\mathcal{P'}\cup \{P_{v_2},P^{v_2},P^{v_4},P_{v_4}\}\setminus \{P'^{v_1},P'_{v_3}\}.$$
}\end{proof}

\begin{cor}\label{CD}
Let $G$ be a connected graph. If $E(G)$ is partitioned to
a collection of cycles $\{C_1,C_2,\ldots,C_k\}$ such that
for each $i$, $2\le i\le k$, $|V(C_i)\setminus\cup_{j<i}V(C_j)|\ge 2$
and $C_1\ne K_3$, then $G$ has an $\rm OPPDC$.
\end{cor}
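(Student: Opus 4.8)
The plan is to argue by induction on $k$, peeling off one cycle at a time and invoking the previous theorem. For the base case $k=1$ we have $G=C_1$, a cycle different from $K_3$ and hence of length at least $4$; such a cycle has an $\rm OPPDC$ by the case $k=0$ handled in the proof of Theorem~\ref{ear}.

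For the inductive step, assume $k\ge 2$ and put $G'=C_1\cup\cdots\cup C_{k-1}$. The collection $\{C_1,\dots,C_{k-1}\}$ still meets the hypotheses of the corollary for $G'$: the inequality $|V(C_i)\setminus\cup_{j<i}V(C_j)|\ge 2$ is inherited for $2\le i\le k-1$, and $C_1\ne K_3$. If $G'$ is connected, the induction hypothesis provides an $\rm OPPDC$ of $G'$, and since $|V(C_k)\setminus V(G')|\ge 2$ is exactly the hypothesis for $i=k$, the previous theorem applied to the partition $E(G)=E(C_k)\cup E(G')$ yields an $\rm OPPDC$ of $G$, provided $C_k$ has length at least $4$. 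If instead $C_k=K_3$, then $C_k$ has at least two private vertices, so it meets $G'$ in at most one vertex, and in exactly one since $G$ is connected; in particular $G'$ is then forced to be connected, because a cycle sharing a single vertex with $G'$ cannot reconnect two of its components. The two private vertices of $C_k$ have degree $2$ in $G$, so deleting them one after the other --- the second of degree $1$ once the first is gone --- recovers $G'$, and two applications of Theorem~\ref{main}(vii) push the $\rm OPPDC$ back up to $G$.

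The step I expect to be the true obstacle is the connectedness of $G'=C_1\cup\cdots\cup C_{k-1}$ in the case that $C_k$ has length at least $4$: the hypotheses allow a later cycle to be the first one that ties two earlier fragments together, so $G'$ need not be connected. My first move would be to reorder the cycles, keeping $C_1$ at the front --- it is the one constrained to differ from $K_3$ --- and only ever postponing a cycle to a later slot; postponing a cycle never decreases the number of ``new'' vertices of any cycle coming after it, so all the required inequalities are preserved, and repeatedly postponing the first cycle that is disjoint from the union of its predecessors ought to leave every partial union connected in the well-behaved configurations. Since such a reordering can fail in pathological cases, the fallback would be to peel a carefully chosen non-separating cycle in place of $C_k$ and, when the residual graph decomposes into small stubborn pieces, to dispatch those directly by the other parts of Theorem~\ref{main} --- Theorem~\ref{main}(v) for a $2$-connected piece with few edges, or repeated use of Theorem~\ref{main}(vii) to trim low-degree vertices --- before gluing back with the previous theorem. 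Making this case analysis exhaustive is where most of the effort would go.
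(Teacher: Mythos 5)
Your induction scheme (peel off $C_k$ and apply the preceding theorem, handling a triangular $C_k$ by deleting its two private vertices and using Theorem~\ref{main}(vii) twice) is exactly the iteration the paper has in mind --- the corollary is stated there without any proof, as an immediate consequence of the preceding theorem --- and your base case and your treatment of $C_k=K_3$ are correct. But the obstacle you yourself flag, the possible disconnectedness of $G'=C_1\cup\cdots\cup C_{k-1}$, is a genuine gap that your proposal does not close, and your ``first move'' of reordering by postponement provably cannot close it. Concretely, take $C_1$ a $4$-cycle on $\{a,b,c,d\}$, $C_2$ a triangle on $\{e,f,g\}$ disjoint from $C_1$, and $C_3$ the $5$-cycle $a\,x\,e\,f\,y\,a$. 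The hypothesis of the corollary holds in this order ($C_2$ adds three new vertices, $C_3$ adds $x,y$), the last cycle $C_3$ has length $\ge 4$, yet $G'=C_1\cup C_2$ is disconnected and one of its components is $K_3$, which has no $\rm OPPDC$, so neither the preceding theorem nor a ``componentwise'' union of $\rm OPPDC$s applies. Moreover every ordering of these three cycles that starts with a non-triangle and keeps all partial unions connected forces $C_2$ to come after $C_3$, and then $C_2$ contributes only the single new vertex $g$, so the $\ge 2$ new-vertices condition is destroyed; thus no reordering of the kind you describe exists for this instance.

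What is actually needed at this point is a strengthened induction, e.g.\ that every component of every partial union $C_1\cup\cdots\cup C_i$ either has an $\rm OPPDC$ or is a lone triangle, together with arguments for the new cases this creates: a cycle of length $\ge 4$ attaching to a triangle component (which the preceding theorem does not cover, since that component has no $\rm OPPDC$; here Theorem~\ref{main}(v) and (vii), or Theorem~\ref{block}, can be used), and a cycle that simultaneously meets two or more components of the partial union, which requires an extension of the path surgery in the preceding theorem rather than a citation of it. Your fallback paragraph gestures at exactly these tools but does not carry out the case analysis, and by your own admission that is where the real work lies; as written, the proposal is therefore incomplete precisely at the step that makes the corollary non-trivial.
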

In the following theorem, we give a sufficient condition
for the existence of an $\rm OPPDC$ in
graphs of minimum degree at most three.

\begin{thm}\label{thm:3,4}
If $G\ne K_3$ is a graph with $\Delta(G)\le 4$ and
$\delta(G)\le 3$, then $G$ has an $\rm OPPDC$.
\end{thm}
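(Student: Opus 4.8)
The plan is to reduce to a vertex of degree at most three and then apply the earlier reduction lemmas. First I would dispose of the case $\delta(G) \le 2$: if $G$ has a vertex $v$ with $\deg(v) \le 2$, then by Theorem~\ref{main}(vii) it suffices to find an $\rm OPPDC$ of $G \setminus v$; proceeding by induction on $|V(G)|$ (with the base cases being small graphs, and noting $K_3$ is the only excluded graph), this case is handled, provided $G\setminus v \ne K_3$, which would need a brief separate check when $|V(G)|$ is small. So the substantive case is $\delta(G) = 3$ and $\Delta(G) \le 4$. Fix a vertex $v$ of degree $3$ with $N(v) = \{x,y,z\}$.

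Next I would split on the adjacencies among $x,y,z$. If $N(v)$ induces a $K_3$, then by Theorem~\ref{3} it is enough to produce an $\rm OPPDC$ of $G\setminus v$, and we finish by induction on $|V(G)|$. If at least one pair, say $xz$, is a non-edge of $G$, then by Theorem~\ref{3'} it is enough to produce an $\rm OPPDC$ of $(G\setminus v)\cup\{xz\}$. Here the key point to verify is that $(G\setminus v)\cup\{xz\}$ still satisfies the hypotheses of the theorem, or at least falls under a case we have already settled: deleting $v$ lowers $\deg(x),\deg(y),\deg(z)$ each by $1$, so in $G\setminus v$ these have degree at most $3$, and adding the edge $xz$ raises $\deg(x)$ and $\deg(z)$ back up by $1$ — to at most $4$ — while $\deg(y)$ drops to at most $3$. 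All other vertices keep their degrees, which are at most $4$. Thus $(G\setminus v)\cup\{xz\}$ again has $\Delta \le 4$, and it has a vertex of degree $\le 3$ (namely $y$, unless $y$ became isolated or the graph is tiny, cases one checks directly; in particular if $\deg_G(y)=3$ then $\deg_{(G\setminus v)\cup\{xz\}}(y)=2$). So by induction on the number of vertices it has an $\rm OPPDC$, unless it equals $K_3$, which again is a finite check.

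I would organize the whole argument as a single induction on $|V(G)|$, with the reduction steps being: Theorem~\ref{main}(vii) when $\delta(G)\le 2$; Theorem~\ref{3} when $N(v)$ induces $K_3$; and Theorem~\ref{3'} otherwise. The main obstacle is bookkeeping the small base cases and making sure the reduced graph is never the forbidden $K_3$ (and never falls outside the hypotheses): for instance, when $G$ has few vertices, $G\setminus v$ or $(G\setminus v)\cup\{xz\}$ could be $K_3$, and one must either verify $G$ itself has an $\rm OPPDC$ directly or show this situation cannot arise from a graph with $\delta(G)=3$, $\Delta(G)\le 4$. A related subtlety: after adding $xz$ we must make sure we do not create a vertex of degree exceeding $4$, which is exactly why we picked the non-adjacent pair to contain $y$ as the "freed" vertex — reusing the degree budget released by deleting $v$. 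Apart from these finite verifications, each inductive step is immediate from the cited lemmas, so the proof is essentially a careful case analysis feeding into Theorems~\ref{3}, \ref{3'}, and \ref{main}(vii).
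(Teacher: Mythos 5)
Your proposal is correct and follows essentially the same route as the paper: induction on the number of vertices, deleting a vertex of degree at most $2$ via Theorem~\ref{main}(vii), and handling a degree-$3$ vertex via Theorem~\ref{3} (when $N(v)$ induces $K_3$) or Theorem~\ref{3'} (adding the missing edge $xz$). Your extra bookkeeping that $(G\setminus v)\cup\{xz\}$ still has $\Delta\le 4$ and $\delta\le 3$, and your attention to the small/degenerate base cases, is in fact more careful than the paper's own write-up.
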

\begin{proof}{We proceed by induction on the order of graph, $n$.
For $n=2$ the statement is trivial. For $n\ge 3$, suppose
$\deg(v)=\delta(G)\le 3$. If $d(v)=1$ or $2$, then
$G'=G\setminus v$ is a graph of order
$n-1$, $\Delta(G')\le 4$, and $\delta(G')\le 3$. Therefore, by
the induction hypothesis $G'$ has an $\rm OPPDC$, and  by
Theorem~\ref{main}(vii), $G$  also has an $\rm OPPDC$.\\
Let $\deg(v)=3$ and $N(v)=\{x,y,z\}$. Now, if $N(v)$
induces $K_3$, then by the induction
hypothesis and by Theorem~\ref{3}, $G$ has an $\rm OPPDC$.
Otherwise, let $e=xz\notin E(G)$. Thus by the induction hypothesis,
 $G\setminus v \cup\{e\}$ has an $\rm OPPDC$.
Therefore by Theorem~\ref{3'}, $G$ admits an $\rm OPPDC$.
}\end{proof}
\begin{cor}
Every separable $4$-regular graph  has an $\rm OPPDC$.
\end{cor}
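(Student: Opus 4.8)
The plan is to reduce to a cut vertex and then apply Theorem~\ref{thm:3,4} to the two sides. Let $G$ be a separable $4$-regular graph. Since $G$ is separable it has a cut vertex $v$, so $G\setminus v$ is disconnected; I would pick one component $C$ of $G\setminus v$ and set $G_1=G[\{v\}\cup V(C)]$ and $G_2$ equal to $G$ induced on $v$ together with all the other components. Then $G=G_1\cup G_2$ with $E(G_1)\cap E(G_2)=\emptyset$, $V(G_1)\cap V(G_2)=\{v\}$, and both $G_1$ and $G_2$ are connected with $\deg_{G_i}(v)\ge 1$. By Theorem~\ref{main}(vi) it suffices to show that $G_1$ and $G_2$ each have an $\rm OPPDC$.

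Next I would check that each $G_i$ meets the hypotheses of Theorem~\ref{thm:3,4}. Every vertex $u\ne v$ of $G_i$ lies in a single component of $G\setminus v$, and that whole component is contained in $G_i$, so $u$ retains all four of its $G$-neighbors; hence $\deg_{G_i}(u)=4$. In particular $\Delta(G_i)\le 4$, and since $G_i$ contains a vertex of degree $4$ it has at least five vertices, so $G_i\ne K_3$. Finally, $\deg_{G_1}(v)+\deg_{G_2}(v)=\deg_G(v)=4$ with both summands at least $1$, so $\deg_{G_i}(v)\le 3$; as every other vertex of $G_i$ has degree $4$, this yields $\delta(G_i)\le 3$. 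Thus Theorem~\ref{thm:3,4} applies to each $G_i$, so each $G_i$ has an $\rm OPPDC$, and therefore $G$ has an $\rm OPPDC$ by Theorem~\ref{main}(vi).

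In truth this is a short corollary rather than a genuinely new argument: all the substance is in Theorem~\ref{thm:3,4}. The only point that needs any care is verifying that the degree constraints pass to the pieces $G_i$, and this is immediate because cutting at $v$ can only decrease $\deg(v)$ and leaves every other degree untouched, so $\Delta$ stays $\le 4$ while $\delta$ drops to at most $3$. Hence I do not expect a real obstacle; the one thing to be deliberate about is choosing the split so that both $G_1$ and $G_2$ remain connected and nontrivial, so that Theorems~\ref{main}(vi) and~\ref{thm:3,4} can both be invoked.
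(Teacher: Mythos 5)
Your proof is correct, and it takes a slightly different decomposition than the paper. The paper's one-line argument applies Theorem~\ref{thm:3,4} to \emph{every block} $G'$ of $G$ (each block has $\Delta(G')\le 4$, and $\delta(G')\le 3$ because each block of a separable graph contains a cut vertex of $G$, whose edges are shared among blocks), and then glues with Theorem~\ref{main}(vi). You instead split at a single cut vertex $v$ into two pieces $G_1,G_2$, each consisting of $v$ together with entire components of $G\setminus v$, verify $\Delta(G_i)\le 4$, $\delta(G_i)\le 3$, $G_i\ne K_3$, and apply Theorem~\ref{thm:3,4} to each piece before gluing with Theorem~\ref{main}(vi). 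Your version has a small advantage in rigor: since every vertex other than $v$ keeps its full degree $4$ in its piece, neither $G_i$ can be $K_3$, so the hypothesis $G\ne K_3$ of Theorem~\ref{thm:3,4} is automatically met; by contrast, a block of a separable $4$-regular graph \emph{can} be isomorphic to $K_3$ (a triangle of cut vertices each completed to degree $4$ in other blocks), a case Theorem~\ref{thm:3,4} does not cover and which the paper's block-by-block phrasing would need Theorem~\ref{block} (which tolerates $K_3$ blocks) to handle. The paper's route is shorter and treats all blocks uniformly; yours is marginally longer but self-contained, needing only that $\Delta\le 4$ and $\delta\le 3$ pass to the two pieces, which you check correctly.
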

\begin{proof}{
If $G$ is a separable $4$-regular graph, then
every block, $G'$, of $G$ is a graph with $\Delta(G')\le 4$ and $\delta(G')\le 3$.
 Therefore,  by Theorems~\ref{thm:3,4} and   ~\ref{main}(vi), $G$ has an $\rm OPPDC$.
}\end{proof}
%
%$$$$$$$$$$$$$$$$$$$$$$$$$$$$$$$$$$$$$$$$$$$$$$$$$$$$$$$$$$$$$$$$$$$$$$$$$$$$$$$$$$$$$$$$$$$$$$
%\section{\large Cartesian product}\label{sec:CP}
Following theorem guarantees the exsitence of an $\rm OPPDC$ for the large family of graphs.
The {\sf Cartesian product}, $G\square H$  of two graphs $G$ and $H$
is the graph with vertex set $V(G)\times V(H)$ and two vertices $(u,v)$
and $(x,y)$ are  adjacent if and only if either $u=x$  and  $vy\in E(H)$,
  or  $ux\in E(G)$  and $v=y$.
%CCCCCCCCCCCCCCCCCCCCCC
In the following theorem we prove that the existence of an $\rm
OPPDC$ for two graphs $G$ and $H$, provides an $\rm OPPDC$ for the
Cartesian product of $G$ and $H$.
\begin{thm}\label{GsquareH}
If  $G$ and $H$ have an $\rm OPPDC$, then $G\square H$ also has an
$\rm OPPDC$.
\end{thm}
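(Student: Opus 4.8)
The plan is to build an $\rm OPPDC$ of $G\square H$ from given $\rm OPPDC$s of the two factors by working ``fibre by fibre''. Write $\mathcal{P}_G$ for an $\rm OPPDC$ of $G$ and $\mathcal{P}_H$ for one of $H$. For each vertex $h\in V(H)$ the subgraph of $G\square H$ induced on $V(G)\times\{h\}$ is a copy $G^{(h)}$ of $G$, and symmetrically for each $g\in V(G)$ we get a copy $H^{(g)}$. The edge set of $G\square H$ is the disjoint union of all the $G$-fibres and all the $H$-fibres, so $E((G\square H)_s)$ splits as $\bigsqcup_{h}E(G^{(h)}_s)\ \sqcup\ \bigsqcup_{g}E(H^{(g)}_s)$. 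The first step is therefore to lay down, in each $G$-fibre $G^{(h)}$, a copy of $\mathcal{P}_G$, and in each $H$-fibre $H^{(g)}$ a copy of $\mathcal{P}_H$; this already covers every arc exactly once. The remaining task is purely a matter of path-endpoint bookkeeping: as it stands, a vertex $(g,h)$ is a beginning $1+1=2$ times (once from the $G$-fibre copy of $P^{g}$, once from the $H$-fibre copy of $P^{h}$) and an end twice, whereas we need it to be a beginning exactly once and an end exactly once.

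The key step is to \emph{splice} paths across fibres to fix the endpoint counts. Fix a reference vertex, say a vertex $h_0\in V(H)$. For a vertex $g\in V(G)$, in the fibre $H^{(g)}$ the copy of $\mathcal{P}_H$ contributes a path beginning at $(g,h_0)$, call it $Q^{g}$, and a path ending at $(g,h_0)$, call it $Q_{g}$. In the fibre $G^{(h_0)}$ the copy of $\mathcal{P}_G$ contributes the path $P^{g}_{h_0}$ beginning at $(g,h_0)$ and the path $P_{g,h_0}$ ending at $(g,h_0)$. The idea is to concatenate $Q_{g}$ with $P^{g}_{h_0}$ at the shared vertex $(g,h_0)$: the result is a single directed path that no longer has $(g,h_0)$ as an internal endpoint — it begins where $Q_{g}$ began and ends where $P^{g}_{h_0}$ ended. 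Doing this for every $g\in V(G)$ removes exactly one ``begin at $(g,h_0)$'' and one ``end at $(g,h_0)$'' token for each $g$, which is precisely what is needed to repair the count at all vertices of the distinguished fibre. One must check that these concatenations are compatible with one another — i.e. that no path is asked to be spliced at both ends in a conflicting way and that the outcome is still a set of simple paths — but since $Q_{g}$ lives entirely in $H^{(g)}$ and $P^{g}_{h_0}$ lives entirely in $G^{(h_0)}$, the two share only the single vertex $(g,h_0)$, so each concatenation produces a genuine (simple) directed path, and distinct values of $g$ involve disjoint pairs of paths.

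Assembling: the final collection $\mathcal{P}$ consists of (a) all fibre-copies of $\mathcal{P}_G$ in $G^{(h)}$ for $h\neq h_0$; (b) all fibre-copies of $\mathcal{P}_H$ in $H^{(g)}$ for $g\in V(G)$, except that $Q_g$ is replaced; (c) the copy of $\mathcal{P}_G$ in $G^{(h_0)}$, except that $P^{g}_{h_0}$ is replaced; (d) the spliced paths $Q_g\!\cdot\!P^{g}_{h_0}$ for $g\in V(G)$. Every arc of $(G\square H)_s$ still lies in exactly one member of $\mathcal{P}$ (splicing does not create or destroy arcs). For the endpoint condition: a vertex $(g,h)$ with $h\neq h_0$ is a beginning once (from $P^{g}$ in $G^{(h)}$) and an end once (from $P_{g}$ in $G^{(h)}$), because the splicing only touched the fibre $h_0$ and the paths $Q^{g}, Q_g$ whose relevant endpoint is at $h_0$; a vertex $(g,h_0)$ is a beginning once (from $Q^{g}$ in $H^{(g)}$, whose beginning-at-$(g,h_0)$ token survives) and an end once (from $P_{g,h_0}$ in $G^{(h_0)}$), since the two offending tokens were consumed by the splice. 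Hence $\mathcal{P}$ is an $\rm OPPDC$ of $G\square H$.

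The main obstacle I anticipate is the last bit of bookkeeping when $G$ or $H$ is small or when the same path plays several roles at once — for instance if in $\mathcal{P}_H$ the path beginning at $h_0$ coincides with (or interacts with) the path ending at $h_0$, or if $P^{g}_{h_0}$ is a length-zero path at an isolated vertex; these degenerate cases need to be handled separately (the remark before Theorem~\ref{block} about length-zero paths at isolated vertices is the relevant convention), but since $G\square H$ has no isolated vertices as soon as $G$ and $H$ each have an edge, and the $\rm OPPDC$s of the factors are genuine, these situations are marginal. The conceptual content is entirely in the ``splice one path per $G$-vertex across the distinguished $H$-fibre'' move; everything else is verification.
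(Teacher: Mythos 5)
There is a genuine gap, and it is in the endpoint bookkeeping that you describe as ``purely a matter of verification''. After you lay down a copy of $\mathcal{P}_G$ in every $G$-fibre and a copy of $\mathcal{P}_H$ in every $H$-fibre, \emph{every} vertex $(g,h)$ of $G\square H$ is a beginning twice and an end twice: once from the copy of $\mathcal{P}_G$ in $G^{(h)}$ and once from the copy of $\mathcal{P}_H$ in $H^{(g)}$. Your splice only consumes one begin-token and one end-token at the vertices $(g,h_0)$ of the single distinguished fibre, so your final claim that ``a vertex $(g,h)$ with $h\neq h_0$ is a beginning once and an end once'' is false: such a vertex still receives a beginning from the path of $\mathcal{P}_H$ starting at $h$ in the fibre $H^{(g)}$ and an end from the path of $\mathcal{P}_H$ ending at $h$ there (neither of which is the replaced path $Q_g$, since $Q_g$ ends at $h_0$). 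Hence your collection has $2|V(G)||V(H)|$ begin-tokens but only $|V(G)||V(H)|+|V(G)|$ paths' worth of surplus removed; it is not an $\rm OPPDC$ unless $|V(H)|=1$.

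The repair is to perform your splice not along one fibre but at \emph{every} vertex: for each $(u,v)\in V(G\square H)$ concatenate the path of the copy of $\mathcal{P}_G$ in $G^{(v)}$ that ends at $(u,v)$ with the path of the copy of $\mathcal{P}_H$ in $H^{(u)}$ that begins at $(u,v)$. This is exactly the paper's construction $\mathcal{R}=\{P_uQ^v : (u,v)\in V(G\square H)\}$. It is consistent because each fibre path is used in exactly one concatenation (a path of $\mathcal{P}_G$ is determined by its terminal vertex, a path of $\mathcal{P}_H$ by its initial vertex), each concatenation is a simple directed path since the two pieces meet only at $(u,v)$ (the point you did argue correctly), and at every vertex exactly one begin-token and one end-token are consumed, leaving exactly one of each. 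So your ``splice across fibres'' idea is the right one, but it must be applied globally, one splice per vertex, not once per vertex of $G$ along a distinguished $H$-level.
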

\begin{proof}{Suppose that $\mathcal{P}$ and $\mathcal{Q}$ are the
$\rm OPPDC$ of $G$ and $H$, respectively. Let $\mathcal{R}=\{P_uQ^v\ : \ (u,v)\in V(G\square
H)\}$, where $P_u\in \mathcal{P}$ is the directed path ends with $u$ in the
copy of $G$ in  $G\square H$  corresponding to the vertex $v$ in $H$, and  $Q^v\in \mathcal{Q}$ is the directed path
 strats from $v$ in the copy of $H$ in $G\square H$ corresponding to the vertex $u$ in $G$. It can be seen that every directed edge of the  symmetric orientation of $G\square H$ is covered by one path in $\mathcal{R}$ and
every vertex $(u,v)$ appears just once as a beginning
and once as an end of a path in $\mathcal{R}$. Therefore, $\mathcal{R}$
 is an $\rm OPPDC$ of $G\square H$.
}\end{proof}
\begin{cor}
If  $G$  has an $\rm OPPDC$, then $G^l=\overbrace{G\square \cdots\square G}^{l\ times}$
also has an $\rm OPPDC$.
\end{cor}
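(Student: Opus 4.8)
The plan is to proceed by induction on $l$, using Theorem~\ref{GsquareH} as the single engine of the argument. For the base case $l=1$ there is nothing to prove, since $G^1=G$ has an $\rm OPPDC$ by hypothesis; if one prefers to begin at $l=2$, that case is precisely Theorem~\ref{GsquareH} applied with $H=G$.

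For the inductive step, suppose $l\ge 2$ and that $G^{l-1}$ has an $\rm OPPDC$. Since the Cartesian product is associative up to isomorphism, we may write $G^l\cong G^{l-1}\square G$. Both factors admit an $\rm OPPDC$: the first by the induction hypothesis and the second by hypothesis. Hence Theorem~\ref{GsquareH} produces an $\rm OPPDC$ of $G^{l-1}\square G$, and therefore of $G^l$, completing the induction.

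The only matters that require a word of care here are bookkeeping rather than genuine obstacles. First, one should note that $G^l$ is well defined, i.e.\ that the $l$-fold Cartesian product is independent (up to isomorphism) of the order in which the binary products are formed; this is the standard associativity of $\square$. Second, the property of admitting an $\rm OPPDC$ is invariant under graph isomorphism, which is immediate from the definition, as an isomorphism $G\to G'$ induces an isomorphism $G_s\to G'_s$ of the symmetric orientations and carries any collection of directed paths realizing an $\rm OPPDC$ of $G$ to one realizing an $\rm OPPDC$ of $G'$. With these two observations recorded, the argument is complete and no step presents a real difficulty.
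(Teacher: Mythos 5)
Your proof is correct and is exactly the intended argument: the paper states this as an immediate corollary of Theorem~\ref{GsquareH}, obtained by iterating that theorem, which is precisely your induction on $l$. The remarks on associativity of $\square$ and isomorphism-invariance of the $\rm OPPDC$ property are fine but routine; nothing further is needed.
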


Theorem~\ref{GsquareH} concludes that the $\rm OPPDC$ conjecture
holds for some well known families of graphs, such as Cartesian
product of cycles, paths, wheels, complete graphs, and  complete
bipartite graphs. In the following an   $\rm OPPDC$ for the
complete bipartite graph is given.

\begin{thm}\label{Kn,m}
Every $K_{n,m}$  has an $\rm OPPDC$.
\end{thm}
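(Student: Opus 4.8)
The plan is to handle the small and degenerate cases first, then give an explicit construction for the generic case. If $\min\{n,m\}=1$, say $m=1$, then $K_{n,1}$ is a star, which is a tree; a single tree trivially has an $\rm OPPDC$ (or invoke Theorem~\ref{main}(i) with one of the trees empty). If $n=m=2$, then $K_{2,2}=C_4$ is a cycle and has an $\rm OPPDC$ by Theorem~\ref{ear} (or directly by the cycle construction in its proof). So assume $n,m\ge 2$ with $\max\{n,m\}\ge 3$.

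Next I would try to realize $K_{n,m}$ as a Cartesian product or as a graph built from smaller pieces to which earlier results apply, but this seems awkward since $K_{n,m}$ is not a product in general. Instead, the cleanest route is induction: I would fix $n$ and induct on $m$, using a vertex-deletion step. Let $v$ be a vertex in the part of size $m$; then $K_{n,m}\setminus v = K_{n,m-1}$. If $n\le 2$ then $\deg(v)=n\le 2$ and Theorem~\ref{main}(vii) immediately lifts an $\rm OPPDC$ of $K_{n,m-1}$ to one of $K_{n,m}$, with base case $K_{n,1}$ (a tree) or $K_{2,2}=C_4$. The genuinely new case is $n,m\ge 3$, where $\deg(v)=n\ge 3$, so Theorem~\ref{main}(vii) no longer applies and one must actually build the paths.

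For the main case I would give a direct construction. Write the two parts as $A=\{a_1,\dots,a_n\}$ and $B=\{b_1,\dots,b_m\}$. Since $K_{n,m}$ is $n$-regular on $B$ and $m$-regular on $A$, note that if both $n$ and $m$ are even then every vertex has... actually $\deg(a_i)=m$ and $\deg(b_j)=n$, so if $m$ and $n$ have opposite parity the graph is not of the type covered by Theorem~\ref{main}(ii); but if $n$ and $m$ are both odd, then every vertex has odd degree and Theorem~\ref{main}(ii) finishes it. So the remaining cases are: at least one of $n,m$ even. The natural explicit $\rm OPPDC$ uses the fact that the arcs of $(K_{n,m})_s$ from $a_i$ to $b_j$ and back can be packed into directed paths that zig-zag $a_{i}b_{j}a_{i'}b_{j'}\cdots$; a standard device is to index $A$ and $B$ cyclically and, for each vertex $w$, let the path $P^w$ starting at $w$ follow a prescribed near-Hamiltonian "diagonal" pattern, so that the starting vertices and ending vertices each run over $V$ exactly once and each of the $2nm$ arcs is used once. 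The hard part will be writing down this pattern so that all three conditions (each arc once, each vertex once as a start, once as an end) hold simultaneously and uniformly in $n,m$; I expect to define the paths via arithmetic progressions of indices mod $n$ and mod $m$ and then check the covering conditions by a counting argument rather than case analysis. Once the even cases are settled explicitly, the odd--odd case by Theorem~\ref{main}(ii) and the small cases above complete the proof.
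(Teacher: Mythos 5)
There is a genuine gap. Your reductions for the easy cases are fine ($m=1$ a star, $K_{2,2}=C_4$, $n\le 2$ via Theorem~\ref{main}(vii), and $n,m$ both odd via Theorem~\ref{main}(ii)), but the heart of the theorem is exactly the case you postpone: $n,m\ge 3$ with at least one of $n,m$ even, where no earlier lemma applies. For that case you never actually produce the paths; you only describe an intention ("a prescribed near-Hamiltonian diagonal pattern\ldots I expect to define the paths via arithmetic progressions of indices\ldots"), and you yourself flag that writing it down so that all three covering conditions hold simultaneously is "the hard part." Since that is precisely where the content of the theorem lies, the proposal as written does not prove the statement.

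The fix is close to what you set up, and it is what the paper does: induct on $m$ with no parity analysis at all, but instead of trying to invoke Theorem~\ref{main}(vii) (which needs the deleted vertex to have degree at most $2$), extend an $\rm OPPDC$ of $K_{n,m-1}$ through the new vertex $w_m$ explicitly. Concretely, if $P^{v_i}_{n,m-1}$ denotes the path starting at $v_i$, set $P^{v_1}_{n,m}=v_1w_m$, $P^{w_m}_{n,m}=w_mv_nP^{v_n}_{n,m-1}$, and $P^{v_i}_{n,m}=v_iw_mv_{i-1}P^{v_{i-1}}_{n,m-1}$ for $2\le i\le n$, keeping the paths starting at the old $w_j$ unchanged; the base case $K_{n,1}$ is handled by $P^{v_1}=v_1w_1$, $P^{w_1}=w_1v_n$, $P^{v_i}=v_iw_1v_{i-1}$. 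One then checks directly that every arc incident with $w_m$ is covered once, that $w_m$ is a start and an end exactly once, and that starts/ends of the old vertices are undisturbed (prefixing a path does not change its terminus). This replaces your unproven "diagonal pattern plus counting argument" with a short verification, and it also makes your parity split and the appeal to Theorem~\ref{main}(ii) unnecessary.
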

\begin{proof}{
Let $V(K_{n,m})=\{v_1,\ldots,v_n;w_1,\ldots,w_m\}$  and $E(K_{n,m})=\{v_iw_j\ : \ 1\le i\le n,\ 1\le j\le m\}$.
We proceed by induction on  $m$.
Define $P^{v_1}_{n,1}= v_1w_1,\ P^{w_1}_{n,1}=w_1v_n$, and $P^{v_i}_{n,1}=v_iw_1v_{i-1}$, for $2\le i\le n$. Therefore,
$$\mathcal{P}_{n,1}=\{P^{w_1}_{n,1},P^{v_i}_{n,1}\ : \ 1\le i\le n\}$$ is an $\rm OPPDC$ of $K_{n,1}$.\\
Now for $m\ge 2,$  define $P^{v_1}_{n,m}= v_1w_m,\ P^{w_m}_{n,m}=w_mv_nP^{v_n}_{n,m-1},\ P^{v_i}_{n,m}=v_iw_mv_{i-1}P^{v_{i-1}}_{n,m-1}$,
 for $2\le i\le n$, and $P^{w_j}_{n,m}=P^{w_j}_{n,m-1}$, for $2\le j\le m-1$. Thus,
%\vspace{-3mm}
$$\mathcal{P}_{n,m}=\{P^{v_i}_{n,m},P^{w_j}_{n,m}\ : \ 1\le i\le n,\ 1\le j\le m\},$$
%$$\mathcal{P}_{n,m}=\{P^{v_1}_{n,m}= v_1w_m,\ %P^{w_m}_{n,m}=w_mv_nP^{v_n}_{n,m-1}\}\bigcup_{i=2}^{n}\{P^{v_i}_{n,m}=v_iw_mv_{i-1}P^{v_{i-1}}_{n,m-1}\}\bigcup_{j=1}^{m-1}\{P^{w_j}_{n,m}=P^{w_j}_{n,m-1}\},\vspace{-6mm}$$
%\vspace{-3mm}
is an  $\rm OPPDC$ of $K_{n,m}$.
}\end{proof}

\bibliographystyle{plain}
%\bibliography{sr328}

%%%%%%%%%%%%%%%%%%%%%%%%%%%%%%%%%%%%%%%%%%%%%%%%%%%%%%%%%%%%%%%%%%%%%%%%%%%
%%%%%%%%%%%%%%%%%%%%%%%%%%%%%%%%%%%%%%%%%%%%%%%%%%%%%%%%%%%%%%%%%%%%%%%%%%%%%%%
%%%%%%%%%%%%%%%%%%%%%%%%%%%%%%%%%%%%%%%%%%%%%%%%%%%%%%%%%%%%%%%%%%%%%%%%%%%%%%%%%%%
%%%%%%%%%%%%%%%%%%%%%%%%%%%%%%%%%%%%%%%%%%%%%%%%%%%%%%%%%%%%%%%%%%%%%%%%%%%%%%%
%%%%%%%%%%%%%%%%%%%%%%%%%%%%%%%%%%%%%%%%%%%%%%%%%%%%%%%%%%%%%%%%%%%%%%%%%%%
\end{document}